\newtheorem{Theorem}{Theorem}[section] 
\newtheorem{Lemma}[Theorem]{Lemma} 
\newtheorem{Remark}[Theorem]{Remark} 
\newtheorem{Corollary}[Theorem]{Corollary}
\renewcommand{\baselinestretch}{1.2}
\newcommand{\R}{\mathbb{R}}
\newcommand{\N}{\mathbb{N}}
\newcommand{\Z}{\mathbb{Z}}
\newcommand{\cF}{\mathcal{F}}
\newcommand{\cC}{\mathcal{C}}
\newcommand{\cD}{\mathcal{D}}
\newcommand{\cH}{\mathcal{H}}
\newcommand{\Var}{\operatorname{Var}}
\newcommand{\si}{\sigma}
\newcommand{\ep}{\varepsilon}
\newcommand{\om}{\omega}
\newcommand{\lf}{\left\lfloor}
\newcommand{\rf}{\right\rfloor}
\newcommand{\mi}{\wedge}
\newcommand{\ph}{\varphi}
\begin{document}

\title{\bf Nonparametric volatility change detection}

\author{{\sc Maria Mohr} and {\sc Natalie Neumeyer}\\ Department of Mathematics, University of Hamburg}

\maketitle

\renewcommand{\baselinestretch}{1.1}

\begin{abstract}
We consider a nonparametric heteroscedastic time series regression model and suggest testing procedures to detect changes in the conditional variance function. The tests are based on a sequential marked empirical process and thus combine classical CUSUM tests with marked empirical process approaches known from goodness-of-fit testing. The tests are consistent against general alternatives of a change in the conditional variance function, a feature that classical CUSUM tests are lacking. We derive a simple limiting distribution and in the case of univariate covariates even obtain asymptotically distribution-free tests. We demonstrate the good performance of the tests in a simulation study and consider exchange rate data as a real data application. 
\end{abstract}

\vspace*{.5cm}

\noindent{\bf Key words:}  change point, conditional variance function, CUSUM, heteroscedasticity, kernel estimation,  Kolmogorov-Smirnov test, marked empirical process, structural change

\noindent{\bf AMS 2010 Classification:} Primary 62M10, Secondary 62G08, 62G10




\small
\normalsize

\section{Introduction}  \label{introduction}


The paper is concerned with the investigation of structural stability of the conditional variance function (volatility function) in nonparametric heteroscedastic time series regression models.
Those models have gained much attention over the last decades and contain as special cases nonparametric AR-ARCH models, which are also called nonparametric CHARN (conditional heteroscedastic autoregressive nonlinear) models; see \cite{Fan2003}
 or \cite{Gao2007}
 for overviews.  
They have been successfully applied to model econometric time series such as  foreign exchange rates or stock market indices, see e.g.\ \cite{Yang1999579} and \cite{ZhaoWu}.
 Here tests for structural changes in the volatility function are of special importance.  

A lot of research has been devoted to the parametric case, notably for ARCH and GARCH models. Among others, \cite{Kokoszka1999182} suggested a CUSUM type test for parameter stability in ARCH models, while \cite{Kulperger20052395} considered partial sums of higher powers of residuals to test for a parameter change in GARCH models. \cite{Berkes2004263} considered tests for parameter stability in GARCH models based on likelihood ratios. \citeauthor{Kengne2012503}'s (\citeyear{Kengne2012503}) test, which is based on quasi likelihood estimators, is applicable to more general parametric causal time series models. \cite{Lee2014101} suggested a residual based CUSUM test for change points in parametric AR-GARCH models, while \cite{Lee20081990} and \cite{Song201841} considered ARMA-GARCH models. Very few results are available in the nonparametric framework. \cite{Chen200579} studied a nonparametric heteroscedastic time series model with a scale change in volatility. However, they assume a compact support of regressors, which is problematic when considering autoregression models.
Tests for change points in the unconditional variance in time series models have been considered as well. 
\cite{Lee2003467} considered parametric autoregression models, as well as fixed design nonparametric regression models with strongly mixing errors using a CUSUM testing procedure. \cite{Chen20141} constructed a ratio test for change point detection in the variance in random design nonparametric regression models. However, their test does not allow for autoregressive effects, as a compact support of regressors is assumed.
A related strand of the literature deals with change point detection in the error distribution of a time series regression model. In the parametric framework \cite{Koul1996380} considered non-linear regression models and \cite{Ling1998741} non-stationary AR models, to just mention a few,
while \cite{Selk2013770} considered nonparametric heteroscedastic autoregression models.

Recently, \cite{Mohr2019} suggested a test for change point in the regression function in nonparametric time series models. They combine traditional CUSUM tests as considered by \cite{Hidalgo1995671}, \cite{Honda199745} and \cite{Su2008347} in the nonparametric context with the marked empirical process approach originally suggested by \cite{Stute1997613} and widely used in the goodness-of-fit literature. Compared with the CUSUM approach the new test shows better power properties, in theory as well as in finite sample simulations. 
In the paper at hand we will modify the CUSUM marked empirical process test in order to test for a change point in the conditional volatility function. 
We obtain tests with  very simple limiting distributions, which are consistent against general fixed alternatives. In the case of univariate covariates one can even obtain tests that are asymptotically distribution-free. 

The paper is organized as follows. In section 2 we define the process on which the test statistics are built. In section 3 we give the limiting distribution of the process under the null hypothesis of no change in the variance function. We further discuss consistency against fixed alternatives of one change point. In section 4 we describe a simulation study and discuss a real data example of currency exchange rates. Section 5 concludes the paper, whereas in the appendix we list the regularity assumptions and prove the asymptotic results. 

\section{The model and test statistic}\label{sec:model}

Consider a strictly stationary and strongly mixing time series $(Y_t,\bm{X}_t)$, $t\in\Z$,  following the nonparametric model
\begin{equation}\label{model}
Y_t=m(\bm{X}_t)+U_t, 
\end{equation}
where $E[U_t|\cF^t]=0$ a.s.\ for the sigma-field $\cF^{t}=\sigma(U_{j-1},\bm{X}_{j}:j\le t)$, and $m:\R^d\to \R$ does not depend on $t$. 
Further, let the following representation for the innovations $U_t$ hold,
\begin{equation}\label{model2}
U_t=\si_t(\bm{X}_t)\ep_t, \ t\in\Z,
\end{equation}
for some functions $\si_t:\R^d\to\R$ and an i.i.d. sequence $(\ep_t)_{t\in\Z}$, such that $\ep_t$ is independent of $\bm{X}_j$ for all $j\le t$ and fulfills $E[\ep_1]=0$, $E[\ep_1^2]=1$ and $E[\ep_1^4]<\infty$. With these restrictions, $\si_t^2$ is the  variance function of $Y_t$, conditioned on $\bm{X}_t$, as
\[\Var(Y_t|\bm{X}_t)=E[U_t^2|\bm{X}_t]=\si_t^2(\bm{X}_t)\mbox{ a.s.}\] 
The $d$-dimensional absolutely continuous covariate $\bm{X}_t$ may include finitely many lagged values of $Y_t$, for instance $\bm{X}_t=(Y_{t-1},\dots,Y_{t-d})^T$, such that the model includes nonparametric AR-ARCH models.  

Our aim is to test whether the function $\si_t^2(\cdot)$ is stable in time $t$. Given observations $(Y_1,\bm{X}_1),\dots,(Y_n,\bm{X}_n)$ the null hypothesis
\[{H}_0: \si_t^2(\cdot)=\si^2(\cdot), \ t=1,\dots,n,\]
for some not further specified function $\si^2:\R^d\to \R$ (not depending on time $t$) will be considered.

The idea is to base tests for $H_0$ on a sequential marked empirical process of residuals, 
\begin{equation}\label{hattildeT}
\hat{ T }_n(s,\bm{z})=\frac{1}{\sqrt{n}}\sum\limits_{t=1}^{\lf ns \rf}\left((Y_t-\hat{m}_n(\bm{X}_t))^2-\hat{\si}^2_n(\bm{X}_t)\right)\om_n(\bm{X}_t)I\{\bm{X}_t\le \bm{z}\}
\end{equation}
indexed in $s\in[0,1]$ and $\bm{z}\in\R^d$. 
Throughout $I\{\dots\}$ denotes an indicator function. 
Further $\om_n(\cdot)=I\{\cdot \in \bm{J}_n\}$ is a weight function with $\bm{J}_n$ specified in assumption \textbf{(J)} in appendix A. The regression and volatility functions are estimated as 
\begin{eqnarray*}
\hat{m}_n(\bm{x})&=& \dfrac{\sum_{j=1}^{n}K\Big(\frac{\bm{x}-\bm{X}_{j}}{h_n}\Big)Y_{j}}{\sum_{j=1}^{n}K\Big(\frac{\bm{x}-\bm{X}_{j}}{h_n}\Big)}
\end{eqnarray*}
and
\begin{eqnarray*}
\hat{\si}_n^2(\bm{x})&=&\frac{\sum_{j=1}^{n}K\Big(\frac{\bm{x}-\bm{X}_{j}}{h_n}\Big)(Y_j-\hat{m}_n(\bm{x}))^2}{\sum_{j=1}^{n}K\Big(\frac{\bm{x}-\bm{X}_{j}}{h_n}\Big)},
\end{eqnarray*}
respectively, with kernel function $K$ and bandwidth $h_n$ as considered in the assumptions in appendix A. 
The null hypothesis $H_0$ of no change in the variance will be rejected for large values of, e.g., a  Kolmogorov-Smirnov type test statistic
\begin{align*}
 T _{n1}:=\sup\limits_{\bm{z}\in\R^d}\sup\limits_{s\in[0,1]}\left|\hat{ T }_n(s,\bm{z})\right|
\end{align*}
due to the following motivation. 
Note  that the volatility function $\si^2_t$ from (\ref{model2}) can be viewed as  regression function in a  regression model
$$U_t^2=\si^2_t(\bm{X}_t)+\xi_t, \quad t\in\Z,$$
with covariate $\bm{X}_t$, response variable $U_t^2$ and innovations
$\xi_t=U_t^2-\si^2_t(\bm{X}_t)$, 
that satisfy
$E[\xi_t|\bm{X}_t]=0$ and $E[\xi_t^2|\bm{X}_t]=\si_t^4(\bm{X}_t)E[(\ep^2_t-1)^2]$ 
a.s. However, this is not a feasible model as $U_t=Y_t-m(\bm{X}_t)$ is unobservable and has to be estimated. 
The term 
$$(Y_t-\hat{m}_n(\bm{X}_t))^2-\hat{\si}^2_n(\bm{X}_t)=:\hat\xi_t$$
 in the definition of the process $\hat{ T }_n$ can be seen as estimator for the innovation $\xi_t$ in the `non-feasible' model above under the null hypothesis $\si_t^2(\cdot)=\si^2(\cdot)\forall t$. Thus $n^{-1/2}\hat{ T }_n$ will vanish for $n\to\infty$ under the null hypothesis. The limiting process of $\hat T_n$ will be given in Corollary \ref{cor:T_dach} below. From this result critical values for a test based on the Kolmogorov-Smirnov type test statistic $ T _{n1}$ can be approximated. The behavior of  $ T _{n1}$ under fixed alternatives will be demonstrated in Remark \ref{consistency} in order to motivate consistency of the test. The process $\hat{ T }_n$ is a consistent improvement of CUSUM tests analogous to the procedure in \cite{Mohr2019} developed for changes in the regression function.  

\begin{Remark}
In model (\ref{model}) we assume a regression function $m$ that is stable in time $t$. For testing of a change in the variance function this assumption makes sense if beforehand one can test for a change in the regression function applying a testing procedure which only reacts sensitive to changes in the regression function, not to changes in the variance function. \cite{Mohr2019} provide such a bootstrap test, which can be applied in cases of unstable variances, but as desired only reacts sensitive to changes in the regression function. Consecutively applying the bootstrap test in \cite{Mohr2019} and, if it does not reject, the test in the paper at hand, gives the knowledge of whether a change occurs in the mean or the variance function. 
\end{Remark}

\section{Asymptotic results} 

Under the regularity assumptions in appendix A one can derive the following decomposition of the process $\hat T_n$ defined in (\ref{hattildeT}) in terms of the process
\begin{equation}\label{tildeT}
 T_n(s,\bm{z})=\frac{1}{\sqrt{n}}\sum\limits_{t=1}^{\lf ns \rf}\xi_t I\{\bm{X}_t\le\bm{z}\},\quad s\in [0,1],z\in\R^d,
\end{equation}
as well as the weak convergence of  $ T_n$. 

\begin{Theorem} \label{mainth}
Assume model (\ref{model}), (\ref{model2}) under the null hypothesis $H_0$ and assumptions \textbf{(G)}, \textbf{($\bm{\xi}$)}, \textbf{(M)}, \textbf{(J)}, \textbf{(F1)}, \textbf{(F2)}, \textbf{(K)}, \textbf{(B1)} and \textbf{(B2)} from appendix A.  

{\bf (i)} Then, $\hat{ T }_n(s,\bm{z})= T _n(s,\bm{z})-s T _{n}(1,\bm{z})+o_P(1)$
uniformly in $s\in[0,1]$ and $\bm{z}\in\R^d$. 

{\bf (ii)} The process $ T _n=\{ T _n(s,\bm{z}):s\in[0,1],\bm{z}\in\R^d\}$
converges weakly in $\ell^{\infty}([0,1]\times\R^d)$ to a centered Gaussian process ${G}$ with
%
\[\mbox{\rm Cov}\left({G}(s_1,\bm{z}_1),{G}(s_2,\bm{z}_2)\right)=(s_1\mi s_2){\Sigma}(\bm{z}_1\mi \bm{z}_2),\]
where ${\Sigma}(\bm{z}):=E[(\ep^2_1-1)^2]\int_{(-\bm{\infty},\bm{z}]}\si^4(\bm{x})f(\bm{x})d\bm{x}$.
\end{Theorem}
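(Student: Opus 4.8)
The plan is to prove part (i) and part (ii) separately, since (i) is an asymptotic linearization (an expansion of the estimated process around the infeasible one) and (ii) is a functional CLT for a partial-sum marked empirical process of a mixing sequence.

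For part (i), the idea is to write the summand of $\hat T_n$ as
\[
(Y_t-\hat m_n(\bm X_t))^2-\hat\si_n^2(\bm X_t)
= U_t^2 - \si^2(\bm X_t) + R_t^{(1)} + R_t^{(2)},
\]
where $R_t^{(1)}$ collects the terms coming from replacing $m$ by $\hat m_n$ (i.e.\ $-2U_t(\hat m_n(\bm X_t)-m(\bm X_t))+(\hat m_n(\bm X_t)-m(\bm X_t))^2$) and $R_t^{(2)}=\si^2(\bm X_t)-\hat\si_n^2(\bm X_t)$ comes from replacing $\si^2$ by its estimator. First I would show, using the standard uniform rates for Nadaraya--Watson estimators on the shrinking support $\bm J_n$ guaranteed by assumptions \textbf{(K)}, \textbf{(B1)}, \textbf{(B2)}, \textbf{(F1)}, \textbf{(F2)}, that
\[
\sup_{s,\bm z}\Big|\frac{1}{\sqrt n}\sum_{t=1}^{\lf ns\rf}\big(R_t^{(1)}+R_t^{(2)} -\text{(leading linear term)}\big)\om_n(\bm X_t)I\{\bm X_t\le\bm z\}\Big| = o_P(1).
\]
The key point — and this is where the CUSUM structure is essential — is that the leading linear terms are themselves (to first order) of the form $c\cdot\frac{1}{\sqrt n}\sum_{t=1}^{\lf ns\rf}(\text{weighted kernel average})$, which is close to $s$ times the same sum over all $t=1,\dots,n$; hence these nonvanishing bias-type contributions appear in $\hat T_n(s,\bm z)$ essentially as $s\cdot(\text{that contribution at }s=1)$ and are therefore cancelled by the centering. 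Concretely one shows $\hat T_n(s,\bm z) = T_n(s,\bm z) - s\,T_n(1,\bm z) + (\text{something uniformly }o_P(1))$; the weight $\om_n$ only matters through $E[\xi_t\,\om_n(\bm X_t)I\{\bm X_t\le\bm z\}]\to E[\xi_t I\{\bm X_t\le\bm z\}]$ plus a negligible remainder, which lets us drop it from the limit and replace the infeasible $U_t^2-\si^2(\bm X_t)$ by $\xi_t$. This can plausibly be imported from \cite{Mohr2019}, where the analogous expansion for the regression-function problem is carried out; the volatility case is structurally the same after noting $U_t^2$ plays the role of the response and $\xi_t$ of the error, with an extra $\hat m_n$-into-$\hat\si_n^2$ substitution to control.

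For part (ii), the plan is the usual two-step argument: (a) finite-dimensional convergence and (b) tightness in $\ell^\infty([0,1]\times\R^d)$. For (a), fix $(s_1,\bm z_1),\dots,(s_k,\bm z_k)$; the vector $(T_n(s_j,\bm z_j))_j$ is a normalized partial sum of the stationary, strongly mixing (by \textbf{(G)}), martingale-difference-like array $\xi_t\,(I\{\bm X_t\le\bm z_1\},\dots,I\{\bm X_t\le\bm z_k\})$ — note $E[\xi_t\mid\bm X_t]=0$ drives the covariance computation — so a CLT for mixing sequences (or a martingale CLT, exploiting $E[\xi_t|\cF^t]$-type conditioning) gives asymptotic normality, and a direct computation using stationarity and $E[\xi_t\xi_{t'}I\{\bm X_t\le\bm z_1\}I\{\bm X_{t'}\le\bm z_2\}]=0$ for $t\ne t'$ together with $E[\xi_t^2|\bm X_t]=\si^4(\bm X_t)E[(\ep_1^2-1)^2]$ yields exactly the stated covariance $(s_1\wedge s_2)\Sigma(\bm z_1\wedge\bm z_2)$ with $\Sigma(\bm z)=E[(\ep_1^2-1)^2]\int_{(-\bm\infty,\bm z]}\si^4 f$. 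For (b), I would treat the two index components: in $s$ the process is a partial sum and tightness in the Skorokhod/uniform sense follows from a moment bound on increments (Assumption \textbf{($\bm\xi$)} supplying the needed moments of $\xi_t$, hence of $\ep_1^4$); in $\bm z$ one uses that $\{I\{\cdot\le\bm z\}:\bm z\in\R^d\}$ is a VC class, so the marked empirical part is manageable by standard empirical-process maximal inequalities adapted to mixing sequences (e.g.\ via a chaining/bracketing argument, or by invoking an off-the-shelf functional CLT for mixing marked empirical processes). Combining, the joint process is tight, and with (a) this gives weak convergence to the Gaussian $G$.

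The main obstacle is part (i): controlling the remainder uniformly in $(s,\bm z)$ while making the nonparametric bias and variance terms from $\hat m_n$ and $\hat\si_n^2$ cancel under the CUSUM centering. One must be careful that the uniform-in-$\bm x$ convergence rates of the kernel estimators (on $\bm J_n$) multiplied by $\sqrt n$ do not blow up — this is exactly what forces the bandwidth conditions in \textbf{(B1)}, \textbf{(B2)} — and that the substitution of $\hat m_n$ inside the definition of $\hat\si_n^2$ does not generate a term that fails to have the $s\cdot(\cdot)\!\mid_{s=1}$ structure. The cleanest route is to mirror the decomposition in \cite{Mohr2019} and verify that each non-negligible piece factors through a quantity that is (uniformly) within $o_P(1)$ of $s$ times its value at $s=1$; part (ii) is then comparatively routine given the mixing and moment assumptions and the VC property of indicator half-spaces.
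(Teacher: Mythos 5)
Your proposal follows essentially the same route as the paper: the same decomposition $(Y_t-\hat m_n(\bm X_t))^2-\hat\si_n^2(\bm X_t)=\xi_t+(m(\bm X_t)-\hat m_n(\bm X_t))^2+2U_t(m(\bm X_t)-\hat m_n(\bm X_t))+(\si^2(\bm X_t)-\hat\si_n^2(\bm X_t))$, with the squared and cross terms shown negligible via uniform kernel rates and a bracketing/exponential-inequality argument for mixing sequences, and the $\si^2-\hat\si_n^2$ term producing exactly the $-s\,T_n(1,\bm z)$ cancellation you describe (the paper's Lemmas B.2 and B.3). Part (ii) is likewise handled as you propose, via finite-dimensional convergence (a mixing CLT, using $E[\xi_t\mid\cF^t]=0$ for the covariance) plus asymptotic equicontinuity of the sequential marked empirical process.
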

\noindent Here and throughout we define $(-\bm{\infty},\bm{z}]=(-\infty,z_1]\times\cdots\times(-\infty,z_d]$ for $\bm{z}=(z_1,\dots,z_d)\in\R^d$. 
The proof of Theorem \ref{mainth} is given in appendix B. An application of the continuous mapping theorem and Slutsky's lemma give the following weak convergence result for the process $\hat{ T }_n$.

\begin{Corollary}\label{cor:T_dach}
Suppose that the assumptions of Theorem \ref{mainth} and $H_0$ are satisfied. Then the process 
$\hat{ T }_n$ converges weakly in $\ell^{\infty}([0,1]\times\R^d)$ to a centered Gaussian process ${G}_0$ with
%
\[\mbox{\rm Cov}\left({G}_0(s_1,\bm{z}_1),{G}_0(s_2,\bm{z}_2)\right)=(s_1\mi s_2-s_1s_2){\Sigma}(\bm{z}_1\mi \bm{z}_2).\]
\end{Corollary}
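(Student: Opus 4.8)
The plan is to deduce the corollary directly from the two parts of Theorem \ref{mainth} via the continuous mapping theorem and Slutsky's lemma, exactly as announced in the sentence preceding the statement.

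First I would introduce the ``CUSUM functional'' $\Phi : \ell^{\infty}([0,1]\times\R^d) \to \ell^{\infty}([0,1]\times\R^d)$ defined by $(\Phi x)(s,\bm{z}) = x(s,\bm{z}) - s\,x(1,\bm{z})$. This map is linear, and since $s\in[0,1]$ it satisfies $\|\Phi x\|_{\infty}\le 2\|x\|_{\infty}$, so $\Phi$ is a bounded, hence continuous, linear operator on $\ell^{\infty}([0,1]\times\R^d)$. By Theorem \ref{mainth}(ii), $T_n\conv G$ in $\ell^{\infty}([0,1]\times\R^d)$, so the continuous mapping theorem gives $\Phi(T_n)\conv \Phi(G)$. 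By Theorem \ref{mainth}(i), $\hat{T}_n = \Phi(T_n) + o_P(1)$ uniformly in $(s,\bm{z})$, i.e.\ in $\ell^{\infty}$-norm, and Slutsky's lemma then yields $\hat{T}_n\conv \Phi(G)=:G_0$.

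It remains to identify $G_0$. As the image of the centered Gaussian process $G$ under the continuous linear map $\Phi$, the process $G_0$ is again centered Gaussian. Its covariance follows from bilinearity together with the formula $\Cov(G(s_1,\bm{z}_1),G(s_2,\bm{z}_2))=(s_1\mi s_2)\Sigma(\bm{z}_1\mi\bm{z}_2)$ of Theorem \ref{mainth}(ii):
\begin{align*}
\Cov\big(G_0(s_1,\bm{z}_1),G_0(s_2,\bm{z}_2)\big)
&=(s_1\mi s_2)\Sigma(\bm{z}_1\mi\bm{z}_2) - s_2\,(s_1\mi 1)\Sigma(\bm{z}_1\mi\bm{z}_2)\\
&\quad - s_1\,(1\mi s_2)\Sigma(\bm{z}_1\mi\bm{z}_2) + s_1 s_2\,(1\mi 1)\Sigma(\bm{z}_1\mi\bm{z}_2).
\end{align*}
Since $s_1,s_2\in[0,1]$ one has $s_1\mi 1=s_1$, $1\mi s_2=s_2$ and $1\mi 1=1$, so every term carries the factor $\Sigma(\bm{z}_1\mi\bm{z}_2)$ and the scalar part collapses to $s_1\mi s_2-s_1 s_2-s_1 s_2+s_1 s_2=s_1\mi s_2-s_1 s_2$, which is the asserted covariance.

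I expect essentially no obstacle here: the analytic work — the asymptotic decomposition in part (i) and the functional central limit theorem in part (ii) — has already been done in Theorem \ref{mainth}. The only points requiring (minor) care are checking that $\Phi$ is genuinely bounded on $\ell^{\infty}([0,1]\times\R^d)$ so that the continuous mapping theorem applies, verifying that the $o_P(1)$ remainder in part (i) is uniform (hence negligible in $\ell^{\infty}$-norm for Slutsky's lemma), and keeping track of the minima with $1$ in the covariance bookkeeping.
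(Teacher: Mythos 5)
Your proof is correct and follows exactly the route the paper itself indicates (the paper gives no separate proof beyond invoking the continuous mapping theorem and Slutsky's lemma after Theorem \ref{mainth}): you apply the continuous linear CUSUM functional $x\mapsto x(\cdot,\cdot)-(\cdot)\,x(1,\cdot)$ to $T_n$, absorb the uniform $o_P(1)$ remainder from part (i) via Slutsky, and your covariance computation checks out. Nothing further is needed.
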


\noindent The continuous mapping theorem then implies convergence in distribution of the Kolmogorov-Smirnov test statistic, 
\[ T _{n1}\underset{n\to\infty}{\overset{\cD}{\to}}\sup\limits_{\bm{z}\in\R^d}\sup\limits_{s\in[0,1]}\left|{G}_0(s,\bm{z})\right|.\]
In particular in the case $d=1$ using continuity of ${\Sigma}$ and the scaling property of the Brownian motion, it holds that
$ T _{n1}$ converges in distribution to $c^{1/2}T$, where $$T=\sup_{s\in[0,1]}\sup_{t\in[0,1]}|K_0(s,t)|$$ and  $K_0$ is  a Kiefer-M\"{u}ller process.
The constant ${c}=E[((Y_1-m(X_1))^2-\si^2(X_1))^2]$ can be consistently estimated as 
\[\hat{{c}}_n:=\frac{1}{n}\sum\limits_{i=1}^{n}\left((Y_i-\hat{m}_n(X_i))^2-\hat{\si}^2_n(X_i)\right)^2\om_n(X_i),\] 
and the test statistic $ T _{n1}/\hat{{c}}_n^{1/2}$ is asymptotically distribution-free. 
We reject $H_0$ at asymptotic level $\alpha$ if $ T _{n1}/\hat{{c}}_n^{1/2}$  is larger than the (known) $(1-\alpha)$-quantile of $T$. 

\begin{Remark}\label{consistency} To see that the test is consistent against simple fixed alternatives of one change in the volatility function, 
\begin{align*}
{H}_{1}: \exists s_0\in (0,1): \si^2_{n,t}(\cdot)=\begin{cases}\si^2_{(1)}(\cdot), & t=1,\dots, \lf ns_0\rf \\ \si^2_{(2)}(\cdot), & t=\lf ns_0\rf+1,\dots,n,\end{cases} 
\end{align*}
for some functions with $\si^2_{(1)}\not\equiv \si^2_{(2)}$, consider a triangular array
\begin{align*} 
Y_{n,t}=m(\bm{X}_{n,t})+U_{n,t}, \ t=1,\dots,n,
\end{align*}
with regression function $m$ stable in time and innovations such that $E[U_{n,t}|\cF_n^t]=0$ and $E[U_{n,t}^2|\bm{X}_{n,t}]=\si_{n,t}^2(\bm{X}_{n,t})$ a.s. Further assume that the covariate $\bm{X}_{n,t}$ is absolutely continuous with density function $f_{n,t}$. Then $\hat\sigma_n^2(\bm{x})$ will estimate the function
\begin{align*}
\bar{\si}^2_n(\bm{x})
&=\frac{\frac{1}{n}\sum\limits_{i=1}^{n}f_{n,i}(\bm{x})\si^2_{n,i}(\bm{x})}{\frac{1}{n}\sum\limits_{i=1}^{n}f_{n,i}(\bm{x})}=\left(\si^2_{(1)}(\bm{x})-\si^2_{(2)}(\bm{x})\right)\frac{\frac{1}{n}\sum\limits_{i=1}^{\lf ns_0 \rf}f_{n,i}(\bm{x})}{\frac{1}{n}\sum\limits_{i=1}^{n}f_{n,i}(\bm{x})}+\si^2_{(2)}(\bm{x}).
\end{align*}
Now assume that for each $s\in (0,1)$, the limit of $n^{-1}\sum_{i=1}^{\lf ns \rf}f_{n,i}$ exists and denote it by $\bar f^{(s)}$. Then $n^{-1/2}\hat{ T }_n(s_0,\bm{z})$ will converge in probability to the integral
$$\int\limits_{(-\bm{\infty},\bm{z}]}\left(\si^2_{(1)}(\bm{u})-\si^2_{(2)}(\bm{u})\right)\bar{f}^{(s_0)}(\bm{u})\left(1-\frac{\bar{f}^{(s_0)}(\bm{u})}{\bar{f}^{(1)}(\bm{u})}\right)d\bm{u},$$
which, under $H_1$, does not vanish for at least one $\bm{z}=\bm{z}_0$ (provided that $\bar f^{(s_0)}\neq \bar f^{(1)}$). As $ T_{n1}\geq |\hat{ T }_n(s_0,\bm{z}_0)|$, the test statistic will converge to infinity in probability and the test is consistent. 
\end{Remark}

\begin{Remark}\label{consistency-CUSUM}
A traditional CUSUM test statistic in our context would be defined as $\sup_{s\in[0,1]}|\hat{T}_n(s,\infty)|$. With the same reasoning as in Remark \ref{consistency},   $n^{-1/2}\hat{ T }_n(s_0,\infty)$ will converge in probability to 
$$\int\left(\si^2_{(1)}(\bm{u})-\si^2_{(2)}(\bm{u})\right)\bar{f}^{(s_0)}(\bm{u})\left(1-\frac{\bar{f}^{(s_0)}(\bm{u})}{\bar{f}^{(1)}(\bm{u})}\right)d\bm{u},$$
which could be zero, even under the alternative $H_1$. In such a case the CUSUM test is not consistent. 
\end{Remark}

\section{Finite sample properties}  

\subsection{Simulations}

A Monte Carlo study is conducted in order to compare the results for $T_{n1}$ from section \ref{sec:model} and a Cram\'er-von Mises type test $T_{n2}:=\sup_{z\in\R}\int_{0}^{1}|\hat{T}_n(s,z)|^2ds$ with those of the traditional CUSUM versions denoted by $KS:=\sup_{s\in[0,1]}|\hat{T}_n(s,\infty)|$ and $CM:=\int|\hat{T}_n(s,\infty)|^2ds$. All simulations are carried out with a level of $5\%$, $1000$ replications and for sample sizes $n\in\{100,300,500\}$. For the nonparametric estimators $\hat{m}_n$ and $\hat{\si}^2_n$ we use an Epanechnikov kernel $K$ and $h_n=n^{-1/3}$ as a simple ad hoc bandwidth. Furthermore, we set $c_n=\log(n)$ for the weighting function. The data is simulated from the following models.
\begin{alignat*}{3}
&&\text{(model 1)} 
\hspace{2cm}&Y_t=m(X_t)+\si_t(X_t)\ep_t, \ \ep_t\sim\mathcal{N}(0,1), & \\
&~&&\si_t(x)=\begin{cases}0.5\exp(-0.2x), \ & t=1,\dots, \lf ns_0\rf \\0.5\exp(0.2x), \ & t=\lf ns_0 \rf +1,\dots, n,\end{cases}
\end{alignat*}
where $X_t$ is an exogenous variable following the AR(1) model $X_t=0.4X_{t-1}+\xi_t$ with $\xi_t$ being i.i.d.~$\sim\mathcal{N}(0,1)$.
\begin{alignat*}{3}
&&\text{(model 2)} 
\hspace{2cm}&Y_t=m(Y_{t-1})+\si(Y_{t-1})\ep_t,  \ \ep_t\sim\mathcal{N}(0,1),& \\
&~&&\si_t(x)=\begin{cases}\sqrt{0.1+0.1x^2}, \ & t=1,\dots, \lf ns_0\rf \\\sqrt{0.1+0.7x^2}, \ & t=\lf ns_0 \rf +1,\dots, n.\end{cases} \hspace{0.3cm}
\end{alignat*}
For both model 1 and 2 we  consider $s_0\in \{0,0.25,0.5,0.75,1\}$ and two different choices for the regression function, namely $m(x)=0.5x$ (case (a)) and $m(x)=-0.5x$ (case (b)).

Model 1 is a heteroscedastic regression model with autoregressive covariables while model 2 is a heteroscedastic autoregression (AR-ARCH) model. In both cases $H_0$ is satisfied for $s_0\in\{0,1\}$ and $H_1$ is satisfied for $s_0\in\{0.25,0.5,0.75\}$. Further, note that data generated from both models fulfill the stationarity and mixing assumptions when $s_0\in\{0,1\}$ (see Remark \ref{rem G} in appendix A). 

Table \ref{sim_model 1} shows the rejection frequencies for model 1. To summarize the performance of the tests it is to mention that all level simulations ($s_0\in\{0,1\}$) show reasonably good results. The tests based on $T_{n1}$ and $T_{n2}$ show nice consistency properties ($s_0\in\{0.25,0.5,0.75\}$), rejecting the null more frequently with increasing sample sizes, where $T_{n2}$ has larger power. The classical CUSUM tests, however, clearly fail in detecting the change, having a power that does not exceed $10 \%$ for all cases (see Remark \ref{consistency-CUSUM}). All of the tests perform rather poorly when the sample size is small, i.e.\ for $n=100$. Furthermore, we note that changes occurring at $s_0=0.5$ are easiest to detect.
\begin{table}[htbp]
\small
  \centering 
	\caption{Rejection frequencies in model 1}\label{sim_model 1}
\begin{tabular}{@{}r r c r r r r c r r r r @{}} 
    \toprule
\multicolumn{2}{c}{}& \phantom{a} & \multicolumn{4}{c}{model 1 (a)} & \phantom{a} & \multicolumn{4}{c}{model 1 (b)}\\
\cmidrule{4-7} \cmidrule{9-12}
 $s_0$& $n$  &  & $T_{n1}$   & $T_{n2}$  &  $KS$ & $CM$  &  & $T_{n1}$   & $T_{n2}$  &  $KS$ & $CM$\\ 
\midrule 
$0$   & $100$  & &  		 $0.035$  &  $0.058$  &	 $0.046$  &  $0.042$  	& &		$0.052$  &  $0.064$  &  $0.059$  &  $0.047$	 \\
			   & $300$  & &      $0.053$  &  $0.073$  &  $0.058$  &  $0.048$    & &   $0.056$  &  $0.068$  &  $0.064$  &  $0.055$  \\		
				 & $500$  & &      $0.057$  &  $0.071$  &  $0.062$  &  $0.055$    & &   $0.062$  &  $0.064$  &  $0.059$  &  $0.043$  \\
\midrule 
$0.25$   & $100$  & &  		 $0.041$  &  $0.080$  &	 $0.052$  &  $0.048$  	& &		$0.053$  &  $0.081$  &  $0.060$  &  $0.054$	 \\
			   & $300$  & &      $0.112$  &  $0.157$  &  $0.072$  &  $0.062$    & &   $0.099$  &  $0.155$  &  $0.072$  &  $0.051$  \\		
				 & $500$  & &      $0.187$  &  $0.266$  &  $0.069$  &  $0.050$    & &   $0.216$  &  $0.294$  &  $0.097$  &  $0.080$  \\
\midrule 
$0.50$   & $100$  & &  		$0.063$  &  $0.122$  &  $0.053$  &  $0.057$			&	&	 	$0.068$  &  $0.120$  &  $0.073$  &  $0.066$  \\
			   & $300$  & &  		$0.210$  &  $0.276$  &  $0.091$  &  $0.073$   	&	&   $0.199$  &  $0.279$  &  $0.081$  &  $0.068$  \\		
				 & $500$  & &  		$0.413$  &  $0.521$  &  $0.097$  &  $0.067$			& &	  $0.428$  &  $0.510$  &  $0.096$  &  $0.074$ 	\\		
\midrule 
$0.75$   & $100$  & &  		$0.055$  &  $0.092$  &  $0.074$  &  $0.067$			&	&  	$0.051$  &  $0.084$  &  $0.061$  &  $0.066$  \\
			   & $300$  & &  		$0.130$  &  $0.174$  &  $0.079$  &  $0.053$		  &	&   $0.120$  &  $0.196$  &  $0.080$  &  $0.068$  \\		
				 & $500$  & &  		$0.222$  &  $0.291$  &  $0.086$  &  $0.074$			&	&   $0.239$  &  $0.304$  &  $0.096$  &  $0.074$  \\		
						\midrule 
$1$      & $100$  & &  		 $0.045$  &  $0.072$  &	 $0.062$  &  $0.057$  	& &		$0.046$  &  $0.076$  &  $0.055$  &  $0.055$	 \\
			   & $300$  & &      $0.053$  &  $0.064$  &  $0.056$  &  $0.041$    & &   $0.076$  &  $0.088$  &  $0.081$  &  $0.064$  \\		
				 & $500$  & &      $0.063$  &  $0.071$  &  $0.072$  &  $0.050$    & &   $0.064$  &  $0.070$  &  $0.064$  &  $0.051$  \\
\bottomrule  
\end{tabular}%
\end{table}

The corresponding results in model 2 can be found in table \ref{sim_model 2}. The level of $5\%$ is approximately hold for all tests, even in the case where the variance has a relatively large influence ($s_0=0$). The power simulations suggest that our tests as well as the classical CUSUM tests result in reasonable rejection probabilities, detecting the change more often for increasing sample sizes. Again changes in $s_0=0.5$ are easiest to detect. 

\begin{table}[htbp]
\small
  \centering 
	\caption{Rejection frequencies in model 2}\label{sim_model 2}
\begin{tabular}{@{}r r c r r r r c r r r r @{}} 
    \toprule
\multicolumn{2}{c}{}& \phantom{a} & \multicolumn{4}{c}{model 2 (a)} & \phantom{a} & \multicolumn{4}{c}{model 2 (b)}\\
\cmidrule{4-7} \cmidrule{9-12}
 $s_0$& $n$  &  & $T_{n1}$   & $T_{n2}$  &  $KS$ & $CM$  &  & $T_{n1}$   & $T_{n2}$  &  $KS$ & $CM$\\ 
\midrule 
$0$   & $100$  & &  		 $0.036$  &  $0.066$  &	 $0.041$  &  $0.045$  	& &		$0.039$  &  $0.070$  &	 $0.046$  &  $0.045$ \\

			   & $300$  & &      $0.056$  &  $0.066$  &  $0.062$  &  $0.041$    & &   $0.040$  &  $0.054$  &  $0.046$  &  $0.045$  \\		
				 & $500$  & &      $0.059$  &  $0.064$  &  $0.070$  &  $0.056$    & &   $0.059$  &  $0.074$  &  $0.066$  &  $0.064$  \\
\midrule 
$0.25$   & $100$  & &  		 $0.054$  &  $0.094$  &	 $0.086$  &  $0.079$  	& &		$0.068$  &  $0.096$  &  $0.080$  &  $0.081$	 \\
			   & $300$  & &      $0.165$  &  $0.209$  &  $0.218$  &  $0.214$    & &   $0.153$  &  $0.202$  &  $0.200$  &  $0.194$  \\		
				 & $500$  & &      $0.317$  &  $0.365$  &  $0.405$  &  $0.376$    & &   $0.274$  &  $0.338$  &  $0.364$  &  $0.350$  \\
\midrule 
$0.50$   & $100$  & &  		$0.086$  &  $0.134$  &  $0.123$  &  $0.137$			&	&	 	$0.100$  &  $0.141$  &  $0.143$  &  $0.142$  \\
			   & $300$  & &  		$0.414$  &  $0.433$  &  $0.507$  &  $0.470$   	&	&   $0.423$  &  $0.438$  &  $0.510$  &  $0.470$  \\		
				 & $500$  & &  		$0.743$  &  $0.746$  &  $0.829$  &  $0.780$			& &	  $0.748$  &  $0.746$  &  $0.809$  &  $0.782$ 	\\		
\midrule 
$0.75$   & $100$  & &  		$0.076$  &  $0.110$  &  $0.109$  &  $0.115$			&	&  	$0.082$  &  $0.128$  &  $0.115$  &  $0.119$  \\
			   & $300$  & &  		$0.329$  &  $0.361$  &  $0.410$  &  $0.376$		  &	&   $0.340$  &  $0.353$  &  $0.402$  &  $0.368$  \\		
				 & $500$  & &  		$0.655$  &  $0.636$  &  $0.724$  &  $0.667$			&	&   $0.631$  &  $0.614$  &  $0.705$  &  $0.651$  \\		
						\midrule 
$1$      & $100$  & &  		 $0.049$  &  $0.065$  &	 $0.054$  &  $0.050$  	& &		$0.044$  &  $0.082$  &  $0.053$  &  $0.048$	 \\
			   & $300$  & &      $0.069$  &  $0.063$  &  $0.068$  &  $0.054$    & &   $0.054$  &  $0.073$  &  $0.063$  &  $0.051$  \\		
				 & $500$  & &      $0.064$  &  $0.075$  &  $0.081$  &  $0.052$    & &   $0.056$  &  $0.065$  &  $0.061$  &  $0.045$  \\
\bottomrule  
\end{tabular}%
\end{table}

\subsection{Data example}

In this section we will apply our test to a financial data set that is concerned with exchange rates of currencies. Exchange rate regimes indicate how a country manages its currency with respect to other currencies, it can 
vary from "fixed", over "pegged" to "floating". In the case of a fixed regime, the currency is more or less fixed to some other currency. Contrarily with a floating regime the currency is allowed to fluctuate freely by market forces. Pegged regimes are somehow in between, the currency then has limited flexibility when compared with other currencies. As \cite{Zeileis20101696} point out, information on the exchange rate regime of a country is not always fully disclosed by the corresponding central bank. Hence, data driven methods such as linear regression became popular to classify the exchange rate regime in operation. \cite{Zeileis20101696} suggest that a vanishing error variance can be interpreted as a fixed currency regime, while a small or large error variance can indicate a pegged or floating regime respectively. This is illustrating that the error variance is an important quantity when looking for changes in the exchange rate regime. As such changes are often caused by policy interventions, tests for sudden breaks (rather than smooth transitions) are of reasonable interest. 

We consider the exchange rates of the Chinese Yuan Renminbi (CNY) regressed on the exchange rates of the US Dollar (USD). The reason to do so is that China decided to give up on a fixed exchange rate to the US dollar in 2005. More precisely,
we consider 251 data points which are the daily log-difference returns from July 26nd, 2005 to July 25nd, 2006 
of the CNY and USD each with respect to the Swiss franc (CHF) as numeraire currency. This is the first year of observations of a data set considered by \cite{Zeileis20101696} as well as \cite{Kirch20181579}. Both studies use a linear regression model and a basket of four currencies as regressors, namely the USD, Japanese yen (JPY), Euro (EUR) and the British Pound (GBP). However, the results of \cite{Zeileis20101696} indicate nearly vanishing regression coefficients for the JPY, the EUR and the GBP over the whole investigated time period  from July 26nd, 2005 to July 31st, 2009.   

We first apply the bootstrap test by \cite{Mohr2019} to test for changes in the unknown regression function. With a p-value of $90\%$  it suggests a stable regression function. 

Secondly, we apply our test based on $T_{n1}$ 
using the $95\%$-quantile of the limiting distribution $T$ as critical values. The test clearly rejects the null with a p-value smaller than $0.001 \%$, indicating a change in the conditional variance function. The possible change point can be estimated by $ \mbox{argmax}_{s\in[0,1]}(\sup_{z\in\R}|\hat{T}_n(s,z)|)$  and suggests a change of the exchange rate regime in March 3rd, 2006 which is consistent with the results of \cite{Zeileis20101696}. Figure \ref{fig:Vis_exchange} shows the cumulative sum, $\sup_{z\in\R}|\hat{T}_n(\cdot,z)|$ (top plot), as well as the exchange rates of the CNY plotted against the time (bottom plot). The green dashed line is indicating the critical value while the red dashed line corresponds to the estimated change point. 

Note that applying the tests to the full data set, no change in the regression function is detected (p-value $16\%$), but a change in the variance is clearly detected (p-value smaller than $0.001\%$). However, as the data set is rather large and from the findings of \cite{Zeileis20101696} we expect more than one change in the variance when looking at the full set of observations, which makes the estimation of possible changes more complicated (see also section \ref{sec:remarks}).   

\begin{figure}[htbp]
  \centering
     \includegraphics[width=0.8\textwidth]{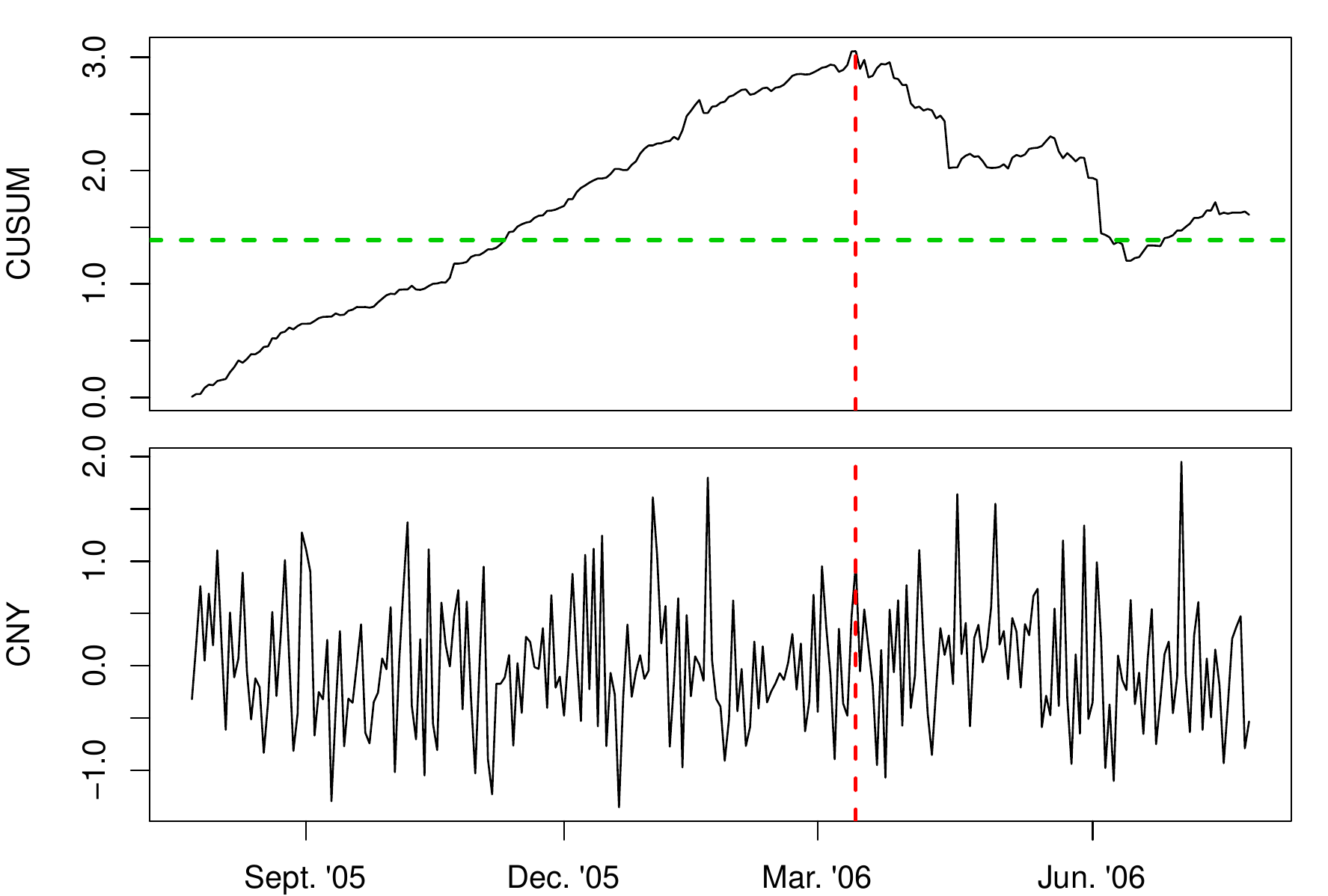}
  \caption{\small Exchange rate data: CUSUM and time series}
  \label{fig:Vis_exchange}
\end{figure}

\section{Concluding remarks} \label{sec:remarks}

This paper closes a gap in the change point testing theory for nonparametric time series models. Assume that one already has accepted that there is no change in the (nonparametric) regression function, but one suspects a change in the (nonparametric) volatility function. In such a case the new test gives a valid procedure. 
To the best knowledge of the authors the new test is the first that can be applied to (nonparametric) autoregressive models (no assumption of bounded support of the covariates) and is consistent against general alternatives of a change point in the variance function. 

Under the assumption that only one change occurs, an estimator for the change point is given by $\mbox{argmax}_{s\in[0,1]}(\sup_{\bm{z}\in\R^d}|\hat{T}_n(s,\bm{z})|)$. Asymptotic properties of this estimator will be considered in future research. If more than one change occurs it might be necessary to modify this estimator. For instance \cite{Fryzlewicz20142243} proposes a wild binary segmentation procedure for the estimation of multiple changes in a simple piecewise-constant signal model, which possibly can be adapted to our setting.

For our theoretical result Theorem \ref{mainth} we need stationarity under the null. However, if there are no changes in both regression function $m$ and variance function $\si^2$, there still could be a change in the error distribution of $\ep_t$. In this case, a bootstrap test similar to the wild bootstrap proposal of \cite{Mohr2019} can be conducted that is sensible to changes in the variance function but not to changes in the error distribution. If both tests of \cite{Mohr2019} and the bootstrap version of the test at hand do not indicate a change in the regression and variance function respectively, the procedure of \cite{Selk2013770} can be used to detect changes in the error distribution.

\appendix

\section{Assumptions}

\begin{enumerate}
\item[\textbf{(G)}] Let $(Y_t,\bm{X}_t)_{t\in\Z}$ be strictly stationary and $\alpha$-mixing with mixing coefficient $\alpha(\cdot)$ such that $\alpha(t)=O(a^{-t})$
for some $a\in(1,\infty)$. 
%
\item[\textbf{($\bm{\xi}$)}] For $\xi_t:=U_t^2-\si^2(\bm{X}_t)$ let there exist some $\gamma>0$ and some even $Q>(d+1)(2+\gamma)$ such that $E[\xi_t|\cF^t]=0$, where $\cF^t=\sigma(U_{j-1},\bm{X}_j:j\le t)$, $E[\xi_t^2|\bm{X}_t]=\tau^2(\bm{X}_t)$ and
 $E[|\xi_t|^{Q\frac{2+\gamma}{2}}|\bm{X}_t]\le c(\bm{X}_t)^Q$ a.s.\ for all $t\in\Z$, for some functions $c, \tau^2: \R^d\to\R$ with 
$\int \bar{c}(\bm{u})f(\bm{u})d(\bm{u})\le M_1$ for some $M_1<\infty$ and $\bar{c}(\bm{u})=\max\left\{\tau^2(\bm{u}),c(\bm{u})^2,\dots,c(\bm{u})^Q\right\}$. 
\item[\textbf{($\bm{\si}$)}] For $Q$, $\gamma$ from assumption \textbf{($\bm{\xi}$)} let $\int |\si^2(\bm{u})|^{Q\frac{2+\gamma}{2}}f(\bm{u})d(\bm{u})\le M_2$ for some $M_2<\infty$.
%
\item[\textbf{(M)}] For some $b>2$ let $E[|Y_1|^{2b}]<\infty$ and let  $\bm{X}_1$ be absolutely continuous with density function $f:\R^d\to\R$ that satisfies $\sup_{\bm{x}\in\R^d}E[|Y_1|^{2b}|\bm{X}_0=\bm{x}]f(\bm{x})<\infty$ and $\sup_{\bm{x}\in\R^d}f(\bm{x})<\infty$. Let there exist some $ j^*<\infty$ such that  $\sup_{\bm{x}_1,\bm{x}_j}E[Y_1^2Y_j^2|\bm{X}_1=\bm{x}_1,\bm{X}_j=\bm{x}_j]f_{1j}(\bm{x}_1,\bm{x}_j)<\infty$ for all $ j\ge j^*$, where $f_{1j}$ is the density function of $(\bm{X}_1,\bm{X}_j)$.
\label{page:(J)}
\item[\textbf{(J)}] Let $(c_n)_{n\in\N}$ be a positive sequence of real numbers satisfying $c_n\to \infty$ and $c_n=O((\log{n})^{1/d})$ and let $\bm{J}_n=[-c_n,c_n]^d$.
\label{page:(F)}
\item[\textbf{(F1)}] For some $C<\infty$ and $c_n$ from assumption \textbf{(J)} let $\bm{I}_n=[-c_n-Ch_n,c_n+Ch_n]^d$, where $h_n$ is from assumption \textbf{(B1)} and \textbf{(B2)} and let $\delta_n^{-1}=\inf_{\bm{x}\in \bm{J}_n}f(\bm{x})>0$ for all $n\in\N$. Further, let for some $r,l\in\N$ and for all $n\in\N$
\begin{eqnarray*}
p_n&=&\max\limits_{\substack{\bm{k}\in\N_0^d\\1\le |\bm{k}|\le l+1+r}}\sup\limits_{\bm{x}\in \bm{I}_n}|D^{\bm{k}}f(\bm{x})|<\infty\\
0<q_n&=&\max\left\{\max\limits_{\substack{\bm{k}\in\N_0^d\\0\le |\bm{k}|\le l+1+r}}\sup\limits_{\bm{x}\in \bm{I}_n}|D^{\bm{k}}m(\bm{x})|,\max\limits_{\substack{\bm{k}\in\N_0^d\\0\le |\bm{k}|\le l+1+r}}\sup\limits_{\bm{x}\in \bm{I}_n}|D^{\bm{k}}\si(\bm{x})|\right\}<\infty,
\end{eqnarray*}
where $|\bm{i}|=\sum_{j=1}^{d}i_j$ and $D^{\bm{i}}=\frac{\partial^{|\bm{i}|}}{\partial x_1^{i_1}\dots\partial x_d^{i_d}}$ for $\bm{i}=(i_1,\dots,i_d)\in\N_0^d$.
\item[\textbf{(F2)}] For $q_n$ from assumption \textbf{(F1)}, $c_n$ from assumption \textbf{(J)} and $C$ from assumption \textbf{(K)}, let for all $\bm{k}\in\N_0^d$ with $|\bm{k}|=2$, 
\[\max\left\{\sup_{\bm{x}\in[-c_n-2h_nC,c_n+2h_nC]^d}\left|D^{\bm{k}}m(\bm{x})\right|,\sup_{\bm{x}\in[-c_n-2h_nC,c_n+2h_nC]^d}\left|D^{\bm{k}}\si(\bm{x})\right|\right\}=O(q_n).\]
\item[\textbf{(K)}] Let $K:\R^d\to\R$ be symmetric in each component, $l+1$ times differentiable with $\int_{\R^d}K(\bm{z})d\bm{z}=1$ and compact support $[-C,C]^d$. Additionally,  let $r\ge 2$ and
$\int_{\R^d}K(\bm{z})\bm{z}^{\bm{k}}d\bm{z}=0$  for all $\bm{k}\in\N_0^d$  with $1\le |\bm{k}|\le r-1$,  where $\bm{z}^{\bm{k}}=z_1^{k_1}\cdots z_d^{k_d}$.
For all $L\in\{K\}\cup\{D^{\bm{k}}K: \bm{k}\in\N_0^d \text{ with } 1\le |\bm{k}|\le l+1\}$ let  $|L(\bm{u})|<\infty$ for all $\bm{u}\in\R^d$ and $|L(\bm{u})-L(\bm{u'})|\le \Lambda \|\bm{u}-\bm{u'}\|$ for some $\Lambda<\infty$ and for all $\bm{u},\bm{u'}\in\R^d$. (Here, $r,l$ and $C$ are from assumption \textbf{(F1)}.)
\item[\textbf{(B1)}]
For $\delta_n,p_n,q_n$ and $r,l$ from assumption \textbf{(F1)} let 
\begin{align*}
\left(\sqrt{\frac{\log n}{nh_n^{d+2(l+1)}}}+h_n^{r}p_n\right)p_n^{l+1}\delta_n^{l+2}=O(1),
\end{align*}
and for some $\eta\in(0,1)$ let
\begin{align*}
\left(\sqrt{\frac{\log n}{nh_n^{d+2(l+1)}}}+h_n^{r}p_n\right)p_n^{l+\eta}q_n^2\delta_n^{l+1+\eta}=o(1).
\end{align*}
\item[\textbf{(B2)}] For $l, p_n, q_n, \delta_n$ from assumption \textbf{(F1)} and $\eta$ from assumption \textbf{(B1)} let
$$\frac{(\log n)^{3+\frac{d}{l+\eta}}}{\sqrt{n^{1-\frac{d}{l+\eta}}h_n^d}}q_n^3\delta_n^2=o(1), \;
\frac{\log {h_n}}{\sqrt{nh_n^d}}=o(1) ,\;
\sqrt{n}h_n^{r}p_nq_n^2=o(1),\;
(\log n)^3h_nq_n^3=o(1)$$
and $\dfrac{(\log n)^{2+\frac{d}{l+\eta}}}{\sqrt{n^{1-\frac{1}{q}-\frac{d}{l+\eta}}}}q_n\delta_n=o(1)$ for $q=Q\frac{2+\gamma}{2}$ with $Q$ and $\gamma$ from assumption \textbf{($\bm{\xi}$)}.
\end{enumerate}

\begin{Remark}\label{rem G}
Assumption \textbf{(G)} is fulfilled by data following causal and stationary ARMA models as they have an MA($\infty$) representation with coefficients that decay exponentially fast (see for instance \cite{Fan2003} Subsection 2.6.1 (iii), p.~69). For more general nonlinear AR-ARCH processes both \cite{Lu19981205} and \cite{Liebscher2005669} give sufficient conditions on regression function, volatility function and the innovations under which the mixing condition in \textbf{(G)} holds. In the linear model
\[Y_t=a_1Y_{t-1}+\dots + a_dY_{t-d}+(b_0+b_1Y_{t-1}^2+\dots+b_dY_{t-d}^2)^{1/2}\ep_t, \ t\in\Z,\]
where $(\ep_t)_t\overset{\text{i.i.d.}}{\sim}\mathcal{N}(0,1)$, the condition in \cite{Lu19981205} simplifies to $(\sum_{i=1}^{d}|a_i|)^2+\sum_{i=1}^{d}b_i<1$.
\end{Remark}

\begin{Remark}
In order to satisfy the first bandwidth assumption in \textbf{(B2)}, a necessary condition is $l+\eta>d$, hence for higher dimensional covariate $\bm{X}_t$, the existence of higher order partial derivatives of $f$ and $m$ is needed. 
In order to satisfy both the first and third bandwidth assumption in \textbf{(B2)} at the same time, depending on the dimension $d$ and the smoothness parameters $l$ and $\eta$, the order of the kernel $r$ needs to be chosen such that $r>\frac{d}{2}\frac{l+\eta}{l+\eta-d}$ holds. As a rule of thumb, one can choose $h_n=O(n^{-k})$ for some
$0<k<\frac{1}{d}-\frac{1}{l+\eta}$ and a kernel, such that $r>\frac{1}{2k}$. That choice satisfies the assumptions given negligible rates for $q_n$ and $\delta_n$.

Further note that the last constraint in \textbf{(B2)} is merely a trade off between existence of moments of $\xi_t$, dimension $d$ and smoothness parameters $l$ and $\eta$. It is satisfied if $q>\frac{l+\eta}{l+\eta-d}$ (given negligible rates for $q_n$ and $\delta_n$).
\end{Remark}


\section{Proofs}
%

\begin{Lemma} \label{Raten kernel}
Under the assumptions of Theorem \ref{mainth} and under $H_0$ the following rates of convergence can be obtained for the kernel estimators $\hat{m}_n$ and $\hat{\si}^2_n$, 
\begin{enumerate}
\item[(i)]
\begin{enumerate}
\item[(a)] $\sup\limits_{\bm{x}\in \bm{J}_n}\left|\hat{m}_n(\bm{x})-m(\bm{x})\right|=O_P\left(\left(\sqrt{\frac{\log{n}}{nh_n^d}}+h_n^rp_n\right)q_n\delta_n\right)$,
\item[(b)] $\sup\limits_{\bm{x}\in \bm{J}_n}\left|D^{\bm{k}}\left(\hat{m}_n(\bm{x})-m(\bm{x})\right)\right|=O_P\left(\left(\sqrt{\frac{\log{n}}{nh_n^{d+2|\bm{k}|}}}+h_n^rp_n\right)p_n^{|\bm{k}|}q_n\delta_n^{|\bm{k}|+1}\right)$ for all $1\le |\bm{k}|\le l+1$,
\item[(c)] $\displaystyle\sup\limits_{\substack{\bm{x},\bm{y}\in \bm{J}_n \\ \bm{x}\neq\bm{y}}}\frac{\left|D^{\bm{k}}\left(\hat{m}_n(\bm{x})-m(\bm{x})\right)-D^{\bm{k}}\left(\hat{m}_n(\bm{y})-m(\bm{y})\right)\right|}{\|\bm{x}-\bm{y}\|^{\eta}}=o_P(1)$ for all $|\bm{k}|= l$, 
\end{enumerate}
\item[(ii)]
\begin{enumerate}
\item[(a)] $\sup\limits_{\bm{x}\in \bm{J}_n}\left|\hat{\si}^2_n(\bm{x})-\si^2(\bm{x})\right|=O_P\left(\left(\sqrt{\frac{\log{n}}{nh_n^d}}+h_n^rp_n\right)q_n^2\delta_n\right)$,
\item[(b)] $\sup\limits_{\bm{x}\in \bm{J}_n}\left|D^{\bm{k}}\left(\hat{\si}^2_n(\bm{x})-\si^2(\bm{x})\right)\right|=O_P\left(\left(\sqrt{\frac{\log{n}}{nh_n^{d+2|\bm{k}|}}}+h_n^rp_n\right)p_n^{|\bm{k}|}q_n^2\delta_n^{|\bm{k}|+1}\right)$ for all $1\le |\bm{k}|\le l+1$,
\item[(c)] $\displaystyle\sup\limits_{\substack{\bm{x},\bm{y}\in \bm{J}_n \\ \bm{x}\neq\bm{y}}}\frac{\left|D^{\bm{k}}\left(\hat{\si}^2_n(\bm{x})-\si^2(\bm{x})\right)-D^{\bm{k}}\left(\hat{\si}^2_n(\bm{y})-\si^2(\bm{y})\right)\right|}{\|\bm{x}-\bm{y}\|^{\eta}}=o_P(1)$ for all $|\bm{k}|= l$. 
\end{enumerate}
\end{enumerate}
\end{Lemma}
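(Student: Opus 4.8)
The plan is to write both estimators as ratios of kernel averages and to reduce the whole statement to two ``master'' estimates — one stochastic, one for the bias — and then to propagate these through the Leibniz and Fa\`a~di~Bruno formulas to recover the derivative and H\"older parts. Put $\hat f_n(\bm x)=\frac{1}{nh_n^d}\sum_{j=1}^nK(\frac{\bm x-\bm X_j}{h_n})$ and $\hat\psi_n(\bm x)=\frac{1}{nh_n^d}\sum_{j=1}^nK(\frac{\bm x-\bm X_j}{h_n})Y_j$, so that $\hat m_n-m=(\hat\psi_n-m\hat f_n)/\hat f_n$; similarly $\hat\si_n^2=\hat\Psi_n/\hat f_n$ with $\hat\Psi_n(\bm x)=\frac{1}{nh_n^d}\sum_{j=1}^nK(\frac{\bm x-\bm X_j}{h_n})(Y_j-\hat m_n(\bm x))^2$. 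For each $\bm k$ with $0\le|\bm k|\le l+1$ one needs, first, the stochastic bound
\[
\sup_{\bm x\in\bm J_n}\Big|\frac{1}{nh_n^{d+|\bm k|}}\sum_{j=1}^n(D^{\bm k}K)\Big(\frac{\bm x-\bm X_j}{h_n}\Big)\big(Z_j-E[Z_j\mid\bm X_j]\big)\Big|=O_P\Big(\sqrt{\tfrac{\log n}{nh_n^{d+2|\bm k|}}}\Big)
\]
for $Z_j\in\{1,Y_j,U_j,U_j^2\}$, and, second, the bias bound: the kernel-smoothed mean $\bm x\mapsto E[\frac{1}{nh_n^d}\sum_jK(\frac{\bm x-\bm X_j}{h_n})g(\bm X_j)]$ approximates $g(\bm x)f(\bm x)$ uniformly over $\bm x\in\bm J_n$ with error $O(h_n^rp_n)$, $O(h_n^rp_nq_n)$ and $O(h_n^rp_nq_n^2)$ for $g\equiv1$, $g=m$ and $g=\si^2$ respectively (Taylor expansion to order $r$ using the vanishing moments of $K$ through order $r-1$ from \textbf{(K)}, the $(l{+}1{+}r)$-fold smoothness in \textbf{(F1)}, and Leibniz on $gf$ — recall $\si^2=\si\cdot\si$ has derivatives $O(q_n^2)$); applying $D^{\bm k}$ under the expectation replaces $K$ by $D^{\bm k}K$ at the cost of a further factor $h_n^{-|\bm k|}p_n^{|\bm k|}$.

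I would prove the stochastic bound by (1) truncating $Z_j$ at a polynomial level, the remainder controlled by the conditional moment assumptions \textbf{(M)}, \textbf{($\bm{\xi}$)} and the last display in \textbf{(B2)} (which links the number $q=Q\frac{2+\gamma}{2}$ of available moments to $d$ and $l+\eta$); (2) covering $\bm J_n$ by polynomially many balls and discretising via the Lipschitz bound on $D^{\bm k}K$ in \textbf{(K)} — by \textbf{(J)}, $c_n=O((\log n)^{1/d})$ and $h_n$ is polynomial, so the logarithm of the net cardinality is $O(\log n)$; (3) a Bernstein-type exponential inequality for the geometrically $\alpha$-mixing sequence of \textbf{(G)} (big/small block decomposition with block length $\asymp\log n$, the exponential mixing decay killing the small blocks), with variance proxy $\asymp(nh_n^{d+2|\bm k|})^{-1}$, followed by a union bound over the net. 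This uniform control over the \emph{expanding} index set for dependent, unbounded summands is the main obstacle; everything else is routine Taylor expansion and bookkeeping of the amplification factors.

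For part (i): the $|\bm k|=0$, $Z_j=1$ instance gives $\sup_{\bm J_n}|\hat f_n-f|=O_P(\sqrt{\log n/(nh_n^d)}+h_n^rp_n)=o_P(\delta_n^{-1})$ by the first display of \textbf{(B1)}, so $\inf_{\bm J_n}\hat f_n\ge\frac12\delta_n^{-1}$ with probability tending to one. Writing $\hat\psi_n-m\hat f_n=\frac{1}{nh_n^d}\sum_jK(\frac{\bm x-\bm X_j}{h_n})\big((m(\bm X_j)-m(\bm x))+U_j\big)$, the $U_j$-part is handled by the stochastic bound with $Z_j=U_j$ and the $m$-increment part by the bias bound, so $\sup_{\bm J_n}|\hat\psi_n-m\hat f_n|=O_P((\sqrt{\log n/(nh_n^d)}+h_n^rp_n)q_n)$; dividing by $\hat f_n\asymp\delta_n^{-1}$ gives (i)(a). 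For (i)(b) one applies $D^{\bm k}$ to $(\hat\psi_n-m\hat f_n)/\hat f_n$: each derivative of $1/\hat f_n$ costs an extra factor $\delta_n$ and brings in $D^{\bm j}\hat f_n=O_P(p_n)$ for $1\le|\bm j|\le|\bm k|$, each $D^{\bm j}$ of the numerator costs $h_n^{-|\bm j|}$ in the stochastic rate and $p_n^{|\bm j|}$ in the bias, and collecting the dominant term produces the prefactor $p_n^{|\bm k|}q_n\delta_n^{|\bm k|+1}$. For (i)(c), when $|\bm k|=l$ the map $D^{\bm k}(\hat m_n-m)$ is continuously differentiable on $\bm J_n$, so for $\bm x\ne\bm y$ in $\bm J_n$
\[
\frac{|D^{\bm k}(\hat m_n-m)(\bm x)-D^{\bm k}(\hat m_n-m)(\bm y)|}{\|\bm x-\bm y\|^{\eta}}\le\Big(2\sup_{\bm J_n}|D^{\bm k}(\hat m_n-m)|\Big)^{1-\eta}\Big(\sup_{\bm J_n}\max_{|\bm j|=l+1}|D^{\bm j}(\hat m_n-m)|\Big)^{\eta},
\]
which, by (i)(b), is $O_P$ of the left-hand side of the second display of \textbf{(B1)} (with $q_n$ in place of $q_n^2$, which only makes it smaller), hence $o_P(1)$.

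For part (ii): expand $(Y_j-\hat m_n(\bm x))^2=(Y_j-m(\bm x))^2-2(Y_j-m(\bm x))(\hat m_n(\bm x)-m(\bm x))+(\hat m_n(\bm x)-m(\bm x))^2$ and $(Y_j-m(\bm x))^2=U_j^2+2U_j(m(\bm X_j)-m(\bm x))+(m(\bm X_j)-m(\bm x))^2$. The leading term is the kernel average of $U_j^2=\si^2(\bm X_j)+\xi_j$: its centred part is covered by the stochastic bound with $Z_j=U_j^2$ (moments from \textbf{($\bm{\xi}$)}) and its mean by the bias bound with $g=\si^2$, of error $O(h_n^rp_nq_n^2)$; the $m$-increment corrections carry $O(h_n)$ and $O(h_n^2)$ times derivatives of $m$ (negligible after \textbf{(B2)}), and the $\hat m_n-m$ terms are bounded via (i)(a)--(b) together with a crude $O_P(q_n)$ bound for the kernel average of $|Y_j-m(\bm x)|$. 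Altogether $\sup_{\bm J_n}|\hat\Psi_n-\si^2\hat f_n|=O_P((\sqrt{\log n/(nh_n^d)}+h_n^rp_n)q_n^2)$, which gives (ii)(a); parts (ii)(b) and (ii)(c) then follow exactly as (i)(b), (i)(c) — differentiating the $\hat m_n-m$ terms now invokes (i)(b), and the extra power of $q_n$ is harmless since $m$ and $\si$ carry the same derivative bound $q_n$ — with \textbf{(B1)} again furnishing the $o_P(1)$ in (ii)(c).
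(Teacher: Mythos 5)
Your overall strategy coincides with the one the paper relies on: the paper does not spell out a proof of Lemma \ref{Raten kernel} at all, but refers to Lemma A.1 of Mohr and Neumeyer (2019) for part (i) and states that the argument is ``similar to the proof of Theorem 8 in Hansen (2008)'', and that argument is exactly your variance--bias split with truncation, a polynomial covering of the expanding set $\bm{J}_n$, a Bernstein-type inequality for geometrically $\alpha$-mixing arrays, and propagation through the Nadaraya--Watson quotient; your interpolation inequality for the H\"older parts (c) and your use of the two displays in \textbf{(B1)} are also the intended route.

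One concrete weak spot is your treatment of the $\hat{m}_n$-correction in part (ii). Bounding the cross term by $\sup_{\bm{J}_n}|\hat{m}_n-m|$ times ``a crude $O_P(q_n)$ bound for the kernel average of $|Y_j-m(\bm{x})|$'' yields $O_P\bigl(\bigl(\sqrt{\log n/(nh_n^d)}+h_n^rp_n\bigr)q_n^2\delta_n\bigr)$ for the \emph{numerator} $\hat{\Psi}_n-\si^2\hat{f}_n$; after dividing by $\hat{f}_n\asymp\delta_n^{-1}$ this is a factor $\delta_n$ larger than the rate claimed in (ii)(a), and $\delta_n\to\infty$ is allowed. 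The loss is avoidable: completing the square in your own expansion gives the exact identity
\begin{equation*}
\hat{\Psi}_n(\bm{x})=\frac{1}{nh_n^d}\sum_{j=1}^{n}K\Bigl(\tfrac{\bm{x}-\bm{X}_j}{h_n}\Bigr)(Y_j-m(\bm{x}))^2-(\hat{m}_n(\bm{x})-m(\bm{x}))^2\hat{f}_n(\bm{x}),
\end{equation*}
so the linear cross term cancels and the correction is genuinely second order, $O_P\bigl(\bigl(\sqrt{\log n/(nh_n^d)}+h_n^rp_n\bigr)^2q_n^2\delta_n^2\bigr)$, which is negligible under \textbf{(B1)}/\textbf{(B2)}. (The same care is needed when you differentiate in (ii)(b), since $D^{\bm{k}}$ also hits the $(\hat{m}_n(\bm{x})-m(\bm{x}))^2$ factor; your invocation of (i)(b) covers this.) A second, smaller point: in (i)(c) the bound ``with $q_n$ in place of $q_n^2$'' is only smaller than the second display of \textbf{(B1)} when $q_n\ge 1$ eventually; this is harmless in all intended applications but should be said. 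With these repairs the proposal delivers the stated rates.
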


Note that the results for the Nadaraya-Watson estimator $\hat{m}_n$ in (i) are also stated in Lemma A.1 in \cite{Mohr2019}. The proof of  Lemma \ref{Raten kernel} is similar to the proof of Theorem 8 in \cite{Hansen2008726} and omitted for the sake of brevity. 

\begin{Lemma}\label{Lemma_01}
Under the assumptions of Theorem \ref{mainth} and under $H_0$ we have uniformly in $s\in[0,1]$ and $\bm{z}\in\R^d$
\begin{align*}
\frac{1}{\sqrt{n}}\sum\limits_{i=1}^{\lf ns \rf}U_i(m(\bm{X}_i)-\hat{m}_n(\bm{X}_i))\om_n(\bm{X}_i)I\{\bm{X}_i\le \bm{z}\}=o_P(1).
\end{align*}
\end{Lemma}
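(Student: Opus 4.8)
The plan is to linearise $\hat m_n(\bm{x})-m(\bm{x})$ via the Nadaraya--Watson decomposition and to treat its bias and stochastic parts separately. Write $\hat f_n(\bm{x})=\frac{1}{nh_n^d}\sum_j K((\bm{x}-\bm{X}_j)/h_n)$, $\hat v_n(\bm{x})=\frac{1}{nh_n^d}\sum_j K((\bm{x}-\bm{X}_j)/h_n)U_j$ and $\hat b_n(\bm{x})=\frac{1}{nh_n^d}\sum_j K((\bm{x}-\bm{X}_j)/h_n)(m(\bm{X}_j)-m(\bm{x}))$. On an event of probability tending to one, on which $\inf_{\bm{x}\in\bm{J}_n}\hat f_n(\bm{x})\ge\frac12\delta_n^{-1}$ (by uniform consistency of $\hat f_n$), one has $\hat m_n(\bm{x})-m(\bm{x})=\hat b_n(\bm{x})/\hat f_n(\bm{x})+\hat v_n(\bm{x})/\hat f_n(\bm{x})$ for all $\bm{x}\in\bm{J}_n$, and since $\om_n(\bm{X}_i)=I\{\bm{X}_i\in\bm{J}_n\}$ the left-hand side of the lemma equals $-A_n(s,\bm{z})-B_n(s,\bm{z})$, where $A_n$ collects the $\hat b_n/\hat f_n$ terms (bias part) and $B_n$ the $\hat v_n/\hat f_n$ terms (stochastic part).

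The bias part is the easy one: bounding $|A_n(s,\bm{z})|\le\big(\sup_{\bm{x}\in\bm{J}_n}|\hat b_n(\bm{x})/\hat f_n(\bm{x})|\big)\,\sqrt n\,\frac1n\sum_{i=1}^n|U_i|$, using that the supremum is $O_P(h_n^{r}p_nq_n\delta_n)$ (the purely deterministic bias contribution to $\hat m_n-m$, cf.\ the estimates behind Lemma \ref{Raten kernel} and \cite{Hansen2008726}) and that $\frac1n\sum_i|U_i|=O_P(1)$ by stationarity, gives $A_n=O_P(\sqrt n\,h_n^{r}p_nq_n\delta_n)=o_P(1)$ uniformly by the bandwidth conditions \textbf{(B1)}--\textbf{(B2)}. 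Alternatively one replaces $\hat b_n/\hat f_n$ by the deterministic conditional bias, whereupon $A_n$ becomes a partial sum of a martingale difference sequence and one argues as for $B_n$ below.

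For $B_n$ I would first replace $\hat f_n(\bm{X}_i)$ by $f(\bm{X}_i)$; on the event above the error is at most $2\delta_n^2\,\sup_{\bm{x}\in\bm{J}_n}|\hat f_n(\bm{x})-f(\bm{x})|\cdot\sup_{\bm{x}\in\bm{J}_n}|\hat v_n(\bm{x})|\cdot\sqrt n\,\frac1n\sum_i|U_i|=o_P(1)$ by the uniform rates for $\hat f_n-f$ and $\hat v_n$ on $\bm{J}_n$ together with \textbf{(B1)}--\textbf{(B2)}. The remaining term is the double sum
\[
M_n(s,\bm{z})=\frac{1}{n^{3/2}h_n^{d}}\sum_{i=1}^{\lf ns\rf}\sum_{j=1}^{n}U_iU_j\,K\!\Big(\tfrac{\bm{X}_i-\bm{X}_j}{h_n}\Big)\,\frac{\om_n(\bm{X}_i)\,I\{\bm{X}_i\le\bm{z}\}}{f(\bm{X}_i)}.
\]
Its diagonal ($i=j$) is, since $\om_nI/f\le\delta_n$ on $\bm{J}_n$, at most $K(0)\delta_n(n^{3/2}h_n^d)^{-1}\sum_{i=1}^nU_i^2=O_P\big(\delta_n/(\sqrt n h_n^d)\big)=o_P(1)$ by \textbf{(B1)}--\textbf{(B2)}. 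The off-diagonal part carries a martingale structure: grouping the two contributions of a pair $\{i,j\}$ according to the larger index and using $E[U_t|\cF^{t}]=0$ together with the $\cF^{t}$-measurability of the remaining factors, it is, for each fixed $(s,\bm{z})$, a sum of martingale differences with respect to $(\cF^{t})_t$, hence has second moment equal to a sum of conditional second moments. A change of variables in the kernel reduces the leading term to something of the type
\[
\frac{1}{n^{3}h_n^{2d}}\sum_{i\neq j}E\!\left[\si^2(\bm{X}_i)\si^2(\bm{X}_j)\,K^2\!\Big(\tfrac{\bm{X}_i-\bm{X}_j}{h_n}\Big)\,\frac{\om_n(\bm{X}_i)}{f(\bm{X}_i)^2}\right]=O\!\Big(\frac{\delta_n^2q_n^4\log n}{nh_n^d}\Big),
\]
using the joint-density and moment bounds in \textbf{(M)} and \textbf{($\bm{\xi}$)} and $\mathrm{vol}(\bm{J}_n)=O(\log n)$; the cross-covariance terms are negligible by the exponential mixing \textbf{(G)} and \textbf{($\bm{\xi}$)}. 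Both expressions are $o(1)$ under \textbf{(B1)}--\textbf{(B2)}, giving $M_n(s,\bm{z})=o_P(1)$ for each fixed $(s,\bm{z})$.

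It remains to make this uniform in $(s,\bm{z})$. For the part of the off-diagonal sum in which $s$ enters only through the upper summation limit, Doob's maximal inequality for the martingale gives $\sup_s|\cdot|=o_P(1)$ directly; for the part in which $s$ sits inside an inner sum, one discretises $[0,1]$ and controls the oscillation between grid points by the $L^2$ bounds for the $s$-increments of $M_n$. Uniformity in $\bm{z}$ follows from standard empirical-process arguments, since $\{I\{\,\cdot\,\le\bm{z}\}:\bm{z}\in\R^d\}$ is a VC class and only polynomially many sets $\{i:\bm{X}_i\le\bm{z}\}$ arise. I expect the real difficulty to lie here rather than in the structure: one must check that every remainder and second-moment estimate is genuinely $o(1)$ under \textbf{(B1)}--\textbf{(B2)}, which requires careful bookkeeping of the joint growth of the cube size $c_n$, the density lower bound $\delta_n$ and the local sup-norms $p_n,q_n$ of $f,m,\si$ and their derivatives on $\bm{I}_n$; and the uniformity over $s$ genuinely needs the extra splitting above, because $\sup_s|M_n(s,\bm{z})|$ is not simply the running maximum of a partial-sum martingale.
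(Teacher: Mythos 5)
Your proof is a genuinely different route from the paper's. The paper does not linearise the Nadaraya--Watson estimator at all: it shows, via the derivative and H\"older-seminorm rates of Lemma \ref{Raten kernel}(i)(b),(c), that $\hat h_n=(m-\hat m_n)\om_n$ lies with probability tending to one in a deterministic shrinking H\"older ball $\cC_{1,n}^{l+\eta}(\bm J_n)$ with sup-norm bound $z_n(\log n)^{1/2}$, and then proves $\sup_{s}\sup_{\bm z}\sup_{h\in\cH}|n^{-1/2}\sum_{i\le ns}h(\bm X_i)U_iI\{\bm X_i\le\bm z\}|=o_P(1)$ by truncating $U_i$ at $n^{1/q}$, bracketing $\cF$ and $\cH$ (with $M_n=O(\exp(c_n^d\ep_{n3}^{-d/(l+\eta)}))$), and applying Liebscher's exponential inequality for mixing arrays. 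That argument handles the suprema over $h$, $s$ and $\bm z$ in one stroke and is precisely what the entropy-flavoured conditions \textbf{(B1)}--\textbf{(B2)} are calibrated for; what it costs is the need for higher-order derivative rates of $\hat m_n-m$. Your U-statistic/martingale decomposition avoids the function-class machinery entirely and exploits the explicit linear structure of $\hat v_n$ together with $E[U_t|\cF^t]=0$; the martingale grouping by the larger index is correct for the filtration $\cF^t=\sigma(U_{j-1},\bm X_j:j\le t)$, and the overall program is a legitimate and classical alternative.

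Two points in your sketch need more than bookkeeping, though. First, every remainder you bound by $\sqrt n$ times a product of uniform rates --- the diagonal term $O_P(\delta_n/(\sqrt n h_n^d))$, the $\hat f_n\to f$ replacement $O_P(\delta_n^2 q_n\log n/(\sqrt n h_n^d))$, and the crude bias bound --- loses the cancellation over $i$ and is $o(1)$ only if $\sqrt n\,h_n^{d}\to\infty$ (up to $\delta_n,q_n$ and logs). This does \emph{not} follow from \textbf{(B2)}, which only controls $\sqrt{n h_n^d}$; it follows from \textbf{(B1)}, which (with $\delta_n,p_n$ bounded below) forces $nh_n^{d+2(l+1)}\gtrsim\log n$ and hence $\sqrt n h_n^d\gtrsim n^{1/2-d/(d+2l+2)}$, a positive power of $n$ because $l\ge d$. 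You should make this dependence explicit, since without it the diagonal term alone has a non-vanishing drift of order $(\sqrt n h_n^d)^{-1}\int_{\bm J_n}\si^2$. Second, the uniformity in $\bm z$ (and in the $s$ that sits inside the inner sums after regrouping) for the degenerate off-diagonal part is not ``standard'': for strongly mixing data you need a maximal or moment inequality for degenerate U-processes indexed by a VC class, or a discretisation combined with exactly the kind of exponential bound the paper invokes. That is a substantial piece of the proof which your sketch defers; it is doable, but it is where most of the work of the paper's own argument lives.
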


\begin{proof}
For some $l$-times differentiable function $h:\bm{J}_n\to \R$ define the norm
\[\|h\|_{l+\eta}:=\max\limits_{\substack{\bm{k}\in\N_0^d \\ 1\le |\bm{k}|\le l}}\sup\limits_{\bm{x}\in\bm{J}_n}\left|D^{\bm{k}}h(\bm{x})\right|+\max\limits_{\substack{\bm{k}\in\N_0^d \\ |\bm{k}|=l}}\sup\limits_{\substack{\bm{x},\bm{y}\in\bm{J}_n\\\bm{x}\neq \bm{y}}}\frac{\left|D^{\bm{k}}h(\bm{x})-D^{\bm{k}}h(\bm{y})\right|}{\|\bm{x}-\bm{y}\|^{\eta}}\]
and the function class $\cH:=\cC_{1,n}^{l+\eta}(\bm{J}_n):=\{h:\bm{J}_n\to\R: \|h\|_{l+\eta}\le 1,\sup_{\bm{x}\in\bm{J}_n}\left|h(\bm{x})\right|\le z_n(\log n)^{1/2}\}$ with $z_n:=q_n\delta_n ((\log n)/(nh_n^d))^{1/2}$.
The third bandwidth condition in \textbf{(B2)} implies
\begin{align*}
\left(\sqrt{\frac{\log n}{nh_n^d}}+h_n^rp_n\right)q_n\delta_n=O\left(\sqrt{\frac{\log n}{nh_n^d}}q_n\delta_n\right)
\end{align*}
and thus Lemma \ref{Raten kernel} (i) implies that
 $P(\hat{h}_n\in \cC_{1,n}^{l+\eta}(\bm{J}_n))\to 1$ as $n\to\infty$ holds for  $\hat{h}_n(\bm{x})=(m(\bm{x})-\hat{m}_n(\bm{x}))\om_n(\bm{x})$. It is then sufficient to consider $n^{-1/2}\sum_{i=1}^{\lf ns \rf}h(\bm{X}_i)U_iI\{\bm{X}_i\le \bm{z}\}$ for $s\in[0,1]$, $\bm{z}\in\R^d$ and $h\in\cH$.
%
Furthermore, using \textbf{($\bm{\xi}$)} and \textbf{($\bm{\si}$)} it can be shown that for $q:=Q\frac{2+\gamma}{2}>2$
\begin{align*}
&\frac{1}{\sqrt{n}}\sum\limits_{i=1}^{\lf ns \rf}h(\bm{X}_i)U_iI\{\bm{X}_i\le \bm{z}\}\\
=&\frac{1}{\sqrt{n}}\sum\limits_{i=1}^{\lf ns \rf}\left(h(\bm{X}_i)U_iI\{|U_i|\le n^{1/q}\}I\{\bm{X}_i\le \bm{z}\}-E[h(\bm{X}_i)U_iI\{|U_i|\le n^{1/q}\}I\{\bm{X}_i\le \bm{z}\}]\right)\\
&+o_P(1)
\end{align*}
holds uniformly in $s\in[0,1]$ and $\bm{z}\in\R^d$. Defining the function class $\cF:=\{(u,\bm{x})\mapsto uI\{|u|\le n^{1/q}\}I\{\bm{x}\le\bm{z}\}:\bm{z}\in\R^d\}$ and imposing $(U_1,\bm{X}_1)\sim P$, the assertion then follows if we show
\begin{align*}
\sup\limits_{s\in[0,1]}\sup\limits_{\ph\in\cF}\sup\limits_{h\in\cH}\bigg|\frac{1}{\sqrt{n}}\sum\limits_{i=1}^{\lf ns\rf}\big(h(\bm{X}_i)\ph(U_i,\bm{X}_i)-\int h\ph dP\big)\bigg|=o_P(1).
\end{align*}
To this end let $\ep_{n1}=n^{-1/2}n^{-1/q}$, $\ep_{n2}=n^{-1/2}$ and $\ep_{n3}=n^{-1/2}/(\log n)$ and let further $0=s_1<\dots<s_{K_n}=1$ partition $[0,1]$ in intervals of length $2\ep_{n1}$ such that $K_n=O(\ep_{n1}^{-1})$. Furthermore, we use the bracketing numbers $J_n:=N_{[~]}\left(\ep_{n2},\cF,\|\cdot\|_{L_2(P)}\right)$ and $M_n:=N_{[~]}\left(\ep_{n3},\cH,\|\cdot\|_{\infty}\right)$, where $\|\cdot\|_{\infty}$ is the supremum norm on $\bm{J}_n$. Let $[\ph_1^{l},\ph_1^{u}],\dots,[\ph_{J_n}^{l},\ph_{J_n}^{u}]$ denote the brackets needed to cover $\cF$. Let furthermore $[h_1^{l},h_1^{u}],\dots,[h_{M_n}^{l},h_{M_n}^{u}]$ define the brackets needed to cover $\cH$. It can be shown that $J_n=O\left(\ep_{n2}^{-2d}\right)$ and $M_{n}=O(\exp(c_n^d\ep_{n3}^{-d/(l+\eta)}))$ and further
\begin{align*}
&\sup\limits_{s\in[0,1]}\sup\limits_{\ph\in\cF}\sup\limits_{h\in\cF}\bigg|\frac{1}{\sqrt{n}}\sum\limits_{i=1}^{\lf ns \rf}\big(h(\bm{X}_i)\ph(U_i,\bm{X}_i)-\int h\ph dP\big)\bigg|\\
&\le\max\limits_{\substack{\substack{1\le k\le K_n\\1\le j\le J_n}\\1\le m\le M_n}}\sup\limits_{\ph\in[\ph_j^{l},\ph_j^u]}\sup\limits_{h\in[h_m^l,h_m^u]}\bigg|\frac{1}{\sqrt{n}}\sum\limits_{i=1}^{\lf ns_k \rf}\big(h(\bm{X}_i)\ph(U_i,\bm{X}_i)-\int h\ph dP\big)\bigg|+o_P(1)\\
&\le \max\limits_{\substack{\substack{1\le k\le K_n\\1\le j\le J_n}\\1\le m\le M_n}}\bigg\{
\bigg|\frac{1}{\sqrt{n}}\sum\limits_{i=1}^{\lf ns_k\rf}\big(h_m^u(\bm{X}_i)\ph_j^u(U_i,\bm{X}_i)I\{h_m^u(\bm{X}_i)\ph_j^u(U_i,\bm{X}_i)\ge 0\}-\int h_m^u\ph_j^uI\{h_m^u\ph_j^u\ge 0\}dP\big)\bigg|,\\
&\hspace{1cm}
\bigg|\frac{1}{\sqrt{n}}\sum\limits_{i=1}^{\lf ns_k\rf}\big(h_m^l(\bm{X}_i)\ph_j^l(U_i,\bm{X}_i)I\{h_m^l(\bm{X}_i)\ph_j^l(U_i,\bm{X}_i)< 0\}-\int h_m^l\ph_j^lI\{h_m^l\ph_j^l< 0\}dP\big)\bigg|,\\
&\hspace{1cm}
\bigg|\frac{1}{\sqrt{n}}\sum\limits_{i=1}^{\lf ns_k\rf}\big(h_m^l(\bm{X}_i)\ph_j^l(U_i,\bm{X}_i)I\{h_m^l(\bm{X}_i)\ph_j^l(U_i,\bm{X}_i)\ge 0\}-\int h_m^l\ph_j^lI\{h_m^l\ph_j^l\ge 0\}dP\big)\bigg|,\\
&\hspace{1cm}
\bigg|\frac{1}{\sqrt{n}}\sum\limits_{i=1}^{\lf ns_k\rf}\big(h_m^u(\bm{X}_i)\ph_j^u(U_i,\bm{X}_i)I\{h_m^u(\bm{X}_i)\ph_j^u(U_i,\bm{X}_i)< 0\}-\int h_m^u\ph_j^uI\{h_m^u\ph_j^u< 0\}dP\big)\bigg|
\bigg\}\\
&+o_P(1).
\end{align*}
In what follows we only consider the first line on the right hand side, while the other ones can be treated similarly. We apply Theorem 2.1 of \cite{Liebscher199669} to the random variable (for $m,j,k$ fixed)
\[Z_{i}:=\left(h_m^u(\bm{X}_i)\ph_j^u(U_i,\bm{X}_i)I\{h_m^u(\bm{X}_i)\ph_j^u(U_i,\bm{X}_i)\ge 0\}-\int h_m^u\ph_j^uI\{h_m^u\ph_j^u\ge 0\}dP\right)I\left\{\tfrac{i}{n}\le s_k\right\}.\]
The mixing coefficient of $\{Z_t:1\le t\le n\}$ can be bounded by the mixing coefficient of $\{(U_t,\bm{X}_t):t\in\Z\}$ due to \cite{Bradley1985165}, Section 2, remark (iv). Further, the variables are centered and have a bound of order $O(z_n(\log n)^{1/2}n^{1/q})$. Applying Theorem 2.1 to $\sum_{i=1}^{n}Z_i$ yields for all $\epsilon>0$ and $n\in\N$ large enough
\begin{align*}
&P\bigg(\max\limits_{\substack{\substack{1\le k\le K_n\\1\le j\le J_n}\\1\le m\le M_n}}\bigg|\frac{1}{\sqrt{n}}\sum\limits_{i=1}^{\lf ns_k\rf}\Big(h_m^u(\bm{X}_i)\ph_j^u(U_i,\bm{X}_i)I\{h_m^u(\bm{X}_i)\ph_j^u(U_i,\bm{X}_i)\ge 0\}\\
&\qquad\qquad\qquad\qquad{}-\int h_m^u\ph_j^uI\{h_m^u\ph_j^u\ge 0\}dP\Big)\bigg|>\epsilon\bigg)\\
\le&\hspace{-0.2cm}\sum\limits_{\substack{\substack{1\le k\le K_n\\1\le j\le J_n}\\1\le m\le M_n}}\hspace{-0.3cm}P\bigg(\bigg|\sum\limits_{i=1}^{\lf ns_k\rf}\Big(h_m^u(\bm{X}_i)\ph_j^u(U_i,\bm{X}_i)I\{h_m^u(\bm{X}_i)\ph_j^u(U_i,\bm{X}_i)\ge 0\}\\
&\qquad\qquad\qquad\qquad{}-\int h_m^u\ph_j^uI\{h_m^u\ph_j^u\ge 0\}dP\Big)\bigg|>\sqrt{n}\epsilon\bigg)\\
\le& K_nJ_nM_n4\exp\left(-\frac{n\epsilon^2}{64n \lf (nh_n^d)^{1/2}\rf z_n^2\log(n)+\frac{8}{3}n^{1/2}\epsilon \lf (nh_n^d)^{1/2}\rf z_n\log(n)^{1/2}n^{1/q}}\right)\\
&+K_nJ_nM_n4\frac{n}{\lf (nh_n^d)^{1/2}\rf}\alpha\left(\lf (nh_n^d)^{1/2}\rf\right)\\
=&o(1),
\end{align*}
where the first, second and last bandwidth constraint in \textbf{(B2)} were used in the last equality. Details are omitted for the sake of brevity. 

\end{proof}

\begin{Lemma}\label{Lemma_02}
Under the assumptions of Theorem \ref{mainth} and under $H_0$ we have uniformly in $s\in[0,1]$ and $\bm{z}\in\R^d$
\begin{align*}
\frac{1}{\sqrt{n}}\sum\limits_{i=1}^{\lf ns \rf}\left(U_i^2-\hat{\si}^2_n(\bm{X}_i)\right)\om_n(\bm{X}_i)I\{\bm{X}_i\le \bm{z}\}= T _n(s,\bm{z})-s T _n(1,\bm{z})+o_P(1).
\end{align*}
\end{Lemma}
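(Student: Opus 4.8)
The plan is to separate the estimation error of $\hat\si^2_n$ and to show that it contributes precisely the term $-sT_n(1,\bm z)$. Writing $U_i^2-\hat\si^2_n(\bm X_i)=\xi_i+\big(\si^2(\bm X_i)-\hat\si^2_n(\bm X_i)\big)$, the left-hand side of the lemma equals
\[
T_n(s,\bm z)+\frac1{\sqrt n}\sum_{i=1}^{\lf ns\rf}\xi_i\big(\om_n(\bm X_i)-1\big)I\{\bm X_i\le\bm z\}+B_n(s,\bm z),
\]
where $B_n(s,\bm z):=\frac1{\sqrt n}\sum_{i=1}^{\lf ns\rf}\big(\si^2(\bm X_i)-\hat\si^2_n(\bm X_i)\big)\om_n(\bm X_i)I\{\bm X_i\le\bm z\}$. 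For the middle (``weight-truncation'') term, $\bm X_i$ is $\cF^i$-measurable and $E[\xi_i\mid\cF^i]=0$ by \textbf{($\bm\xi$)}, so the summands form a centered martingale difference array with $E[\xi_i^2 I\{\bm X_i\notin\bm J_n\}]=E[\tau^2(\bm X_1)I\{\bm X_1\notin\bm J_n\}]\to0$ by dominated convergence (using $\int\bar c(\bm u)f(\bm u)\,d\bm u\le M_1$ and $\bm J_n\uparrow\R^d$); a Doob maximal inequality then controls the supremum over $s$, and the supremum over $\bm z$ is added by the same bracketing argument over $\{I\{\cdot\le\bm z\}\}$ used in the proof of Lemma \ref{Lemma_01}. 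Hence this term is $o_P(1)$ uniformly, and it remains to show $B_n(s,\bm z)=-sT_n(1,\bm z)+o_P(1)$ uniformly in $s\in[0,1]$, $\bm z\in\R^d$.

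For $B_n$ I would substitute the Nadaraya--Watson form of $\hat\si^2_n$: with $\hat f_n(\bm x)=(nh_n^d)^{-1}\sum_j K((\bm x-\bm X_j)/h_n)$,
\[
\hat\si^2_n(\bm X_i)-\si^2(\bm X_i)=\frac1{nh_n^d\hat f_n(\bm X_i)}\sum_{j=1}^nK\Big(\frac{\bm X_i-\bm X_j}{h_n}\Big)\Big[(Y_j-\hat m_n(\bm X_i))^2-\si^2(\bm X_i)\Big],
\]
and, writing $Y_j-\hat m_n(\bm X_i)=U_j+(m(\bm X_j)-m(\bm X_i))+(m(\bm X_i)-\hat m_n(\bm X_i))$, the bracket expands as $\xi_j+\big(\si^2(\bm X_j)-\si^2(\bm X_i)\big)+R_{ij}$, where $R_{ij}$ collects the cross and squared terms in $m(\bm X_j)-m(\bm X_i)$ and $m(\bm X_i)-\hat m_n(\bm X_i)$. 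Plugging this in and interchanging the order of summation gives $B_n=-\frac1{\sqrt n}\sum_{j=1}^n\xi_j W_{n,j}(s,\bm z)+B_n^{(1)}(s,\bm z)+B_n^{(2)}(s,\bm z)$, where $W_{n,j}(s,\bm z):=(nh_n^d)^{-1}\sum_{i=1}^{\lf ns\rf}\om_n(\bm X_i)I\{\bm X_i\le\bm z\}\hat f_n(\bm X_i)^{-1}K((\bm X_i-\bm X_j)/h_n)$, the term $B_n^{(1)}$ carries the $\si^2(\bm X_j)-\si^2(\bm X_i)$ contribution, and $B_n^{(2)}$ the $R_{ij}$ contribution. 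The term $B_n^{(1)}$ is $o_P(1)$ by a standard kernel-bias expansion using the smoothness of $\si^2$ in \textbf{(F1)}/\textbf{(F2)}, the vanishing moments of $K$ in \textbf{(K)}, and $\sqrt n\,h_n^rp_nq_n^2=o(1)$ from \textbf{(B2)}; the term $B_n^{(2)}$ is $o_P(1)$ by Lemma \ref{Raten kernel}(i), the argument of Lemma \ref{Lemma_01} (now with $U_i$ in place of $\xi_i$), and the bandwidth conditions \textbf{(B1)}, \textbf{(B2)}. These two reductions are routine but lengthy, and parallel the treatment of the companion process for the regression function in \cite{Mohr2019}.

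The substantial step is the main term: I would show $W_{n,j}(s,\bm z)=s\,I\{\bm X_j\le\bm z\}+r_{n,j}(s,\bm z)$ with $\sup_{s,\bm z}\big|\frac1{\sqrt n}\sum_{j=1}^n\xi_j r_{n,j}(s,\bm z)\big|=o_P(1)$, whence $B_n(s,\bm z)=-\frac s{\sqrt n}\sum_{j=1}^n\xi_j I\{\bm X_j\le\bm z\}+o_P(1)=-sT_n(1,\bm z)+o_P(1)$. Since $K((\bm X_i-\bm X_j)/h_n)$ vanishes unless $\|\bm X_i-\bm X_j\|\le Ch_n$, on the kernel window one has $\hat f_n(\bm X_i)^{-1}\approx f(\bm X_j)^{-1}$ (uniform consistency of $\hat f_n$ on $\bm J_n$ together with $\inf_{\bm J_n}f=\delta_n^{-1}>0$ from \textbf{(F1)}), $\om_n(\bm X_i)I\{\bm X_i\le\bm z\}\approx\om_n(\bm X_j)I\{\bm X_j\le\bm z\}$ (Lipschitz continuity of $K$ from \textbf{(K)}, except on a layer of width $O(h_n)$ around $\partial(-\bm\infty,\bm z]$), and $(nh_n^d)^{-1}\sum_{i\le\lf ns\rf}K((\bm X_i-\bm X_j)/h_n)\approx s\,f(\bm X_j)$ (a local law of large numbers, uniform in $j$ and $s$); combined with $\om_n(\bm X_j)\to1$ these yield the approximation. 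The main obstacle is bounding $\frac1{\sqrt n}\sum_j\xi_j r_{n,j}(s,\bm z)$ uniformly in $(s,\bm z)$: $r_{n,j}$ is not small in supremum norm (its dominant part lives on the $O(h_n)$ boundary layers around $\partial(-\bm\infty,\bm z]$ and $\partial\bm J_n$), and since $W_{n,j}$ depends on covariates with index $i>j$ the product $\xi_j r_{n,j}$ is not a martingale difference, so a plain maximal inequality is unavailable. I would therefore handle this exactly as in Lemma \ref{Lemma_01}: discretize $s$, bracket the (thin-slab) class of functions of $\bm z$, and apply the Liebscher exponential inequality to the conditionally centered, truncated array $\xi_j$, the room for the bounds being provided by the constraints collected in \textbf{(B2)}. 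Together with the two displays above this completes the proof.
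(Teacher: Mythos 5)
Your opening split $U_i^2-\hat\si^2_n(\bm{X}_i)=\xi_i+(\si^2(\bm{X}_i)-\hat\si^2_n(\bm{X}_i))$ and the treatment of the weight-truncation term coincide with the paper, but from there you take a genuinely different route for the main term $B_n$. The paper first replaces the sequential empirical sum $n^{-1/2}\sum_{i\le\lf ns\rf}(\si^2-\hat\si^2_n)\om_n I\{\cdot\le\bm{z}\}(\bm{X}_i)$ by $s\sqrt{n}\int(\si^2-\hat\si^2_n)\om_n I\{\cdot\le\bm{z}\}f\,d\bm{x}+o_P(1)$ --- this single step is where the factor $s$ is extracted, and it is precisely what the H\"older-ball rates in Lemma \ref{Raten kernel}(ii)(b),(c) are designed for (the random function $\hat\si^2_n-\si^2$ is placed, with probability tending to one, in a shrinking smooth class over which the sequential empirical process is uniformly negligible) --- and only then expands the resulting \emph{integral}, so that the analogue of your weight is the deterministic (given the sample) quantity $\int K_{h_n}(\bm{x}-\bm{X}_i)\om_n(\bm{x})I\{\bm{x}\le\bm{z}\}f(\bm{x})/\hat f_n(\bm{x})\,d\bm{x}$ evaluated only at $s=1$, yielding the terms \eqref{eq:variance_04}--\eqref{eq:variance_07}. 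You instead interchange the double sum immediately and extract $s$ from a local law of large numbers for the random weights $W_{n,j}(s,\bm{z})$, uniformly in $(j,s,\bm{z})$; you never need Lemma \ref{Raten kernel}(ii)(b),(c), but you pay for this with the term $n^{-1/2}\sum_j\xi_j r_{n,j}(s,\bm{z})$, which is the delicate point of your argument and has no exact counterpart in the paper (there the corresponding remainder carries no $s$-dependence and the kernel average over $\bm{x}$ is an integral, not a sum over data points). Your diagnosis of that term is accurate --- it is not adapted and its envelope is concentrated on $O(h_n)$-slabs --- and the proposed treatment (discretize $s$, bracket the slab class, truncate $\xi_j$, apply Liebscher's inequality as in Lemma \ref{Lemma_01}) is workable; but note one missing ingredient: $r_{n,j}$ is a \emph{random} function of $\bm{X}_j$ depending on the whole sample through $\hat f_n$ and the indices $i>j$, so before any bracketing argument applies you must first dominate $|r_{n,j}|$, on an event of probability tending to one, by a deterministic envelope of the form (slab indicator at $\bm{X}_j$) plus a uniformly small term, using the uniform consistency of $\hat f_n$ on $\bm{I}_n$ and $\delta_n$ from \textbf{(F1)}. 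With that supplement your argument goes through and proves the same statement; it is somewhat longer than the paper's two-stage reduction but avoids the derivative-rate machinery for $\hat\si^2_n$ in the main term.
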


Note that the proof of Lemma \ref{Lemma_02} is similar to the proof of Theorem 3.1 (i) in \cite{Mohr2019}. It will only be sketched for the sake of brevity.

\begin{proof}
Using $\xi_t=U_t^2-\si^2(\bm{X}_t)$ under $H_0$, it holds that
\begin{align*}
&\frac{1}{\sqrt{n}}\sum\limits_{i=1}^{\lf ns \rf}\left(U_i^2-\hat{\si}^2_n(\bm{X}_i)\right)\om_n(\bm{X}_i)I\{\bm{X}_i\le \bm{z}\}\\
&=\frac{1}{\sqrt{n}}\sum\limits_{i=1}^{\lf ns \rf}\xi_i\om_n(\bm{X}_i)I\{\bm{X}_i\le \bm{z}\}
+\frac{1}{\sqrt{n}}\sum\limits_{i=1}^{\lf ns \rf}\left(\si^2(\bm{X}_i)-\hat{\si}^2_n(\bm{X}_i)\right)\om_n(\bm{X}_i)I\{\bm{X}_i\le \bm{z}\}.
\end{align*}
By strict stationarity of $\{(\xi_t,\bm{X}_t):t\in\Z\}$ and the moment constraints from \textbf{($\bm{\xi}$)} we deduce that uniformly in $s\in[0,1]$ and $\bm{z}\in\R^d$
\[\frac{1}{\sqrt{n}}\sum\limits_{i=1}^{\lf ns \rf}\xi_i\om_n(\bm{X}_i)I\{\bm{X}_i\le \bm{z}\}= T _n(s,\bm{z})+o_P(1).\]
Making use of the uniform convergence rates of $\hat{\si}^2_n$ stated in Lemma \ref{Raten kernel} (ii) we furthermore obtain  
\begin{align*}
&\frac{1}{\sqrt{n}}\sum\limits_{i=1}^{\lf ns \rf}\left(\si^2(\bm{X}_i)-\hat{\si}^2_n(\bm{X}_i)\right)\om_n(\bm{X}_i)I\{\bm{X}_i\le \bm{z}\}\\
&=s \sqrt{n} \int \left(\si^2(\bm{x})-\hat{\si}^2_n(\bm{x})\right)\om_n(\bm{x})I\{\bm{x}\le \bm{z}\}f(\bm{x})d\bm{x}+o_P(1),
\end{align*}
uniformly in $s\in[0,1]$ and $\bm{z}\in\R^d$. Continuing by inserting the definition of $\hat{\si}^2_n$, using $Y_i=m(\bm{X}_i)+U_i$ and finally $\xi_i=U_i^2-\si^2(\bm{X}_i)$ under $H_0$, it holds that
\begin{align}
&\sqrt{n} \int \left(\si^2(\bm{x})-\hat{\si}^2_n(\bm{x})\right)\om_n(\bm{x})I\{\bm{x}\le \bm{z}\}f(\bm{x})d\bm{x}\notag\\
&=\sqrt{n} \hspace{-0.2cm}\int\hspace{-0.1cm} \left(\si^2(\bm{x})-\frac{1}{n}\sum\limits_{i=1}^{n}K_{h_n}(\bm{x}-\bm{X}_i)(Y_i-\hat{m}_n(\bm{x}))^2\frac{1}{\hat{f}_n(\bm{x})}\right)\om_n(\bm{x})I\{\bm{x}\le \bm{z}\}f(\bm{x})d\bm{x}\notag\\
&=\frac{1}{\sqrt{n}} \sum\limits_{i=1}^{n}\int \left(\si^2(\bm{x})-(Y_i-\hat{m}_n(\bm{x}))^2\right)K_{h_n}(\bm{x}-\bm{X}_i)\om_n(\bm{x})I\{\bm{x}\le \bm{z}\}\frac{f(\bm{x})}{\hat{f}_n(\bm{x})}d\bm{x}\notag\\
&=-\frac{1}{\sqrt{n}}\sum\limits_{i=1}^{n}\xi_i\int K_{h_n}(\bm{x}-\bm{X}_i)\om_n(\bm{x})I\{\bm{x}\le \bm{z}\}\frac{f(\bm{x})}{\hat{f}_n(\bm{x})}d\bm{x}\label{eq:variance_04}\\
&+\frac{1}{\sqrt{n}} \sum\limits_{i=1}^{n}\int \left(\si^2(\bm{x})-\si^2(\bm{X}_i)\right)K_{h_n}(\bm{x}-\bm{X}_i)\om_n(\bm{x})I\{\bm{x}\le \bm{z}\}\frac{f(\bm{x})}{\hat{f}_n(\bm{x})}d\bm{x}\label{eq:variance_05}\\
&+\frac{1}{\sqrt{n}}\sum\limits_{i=1}^{n}\int(m(\bm{X}_i)-\hat{m}_n(\bm{x}))^2 K_{h_n}(\bm{x}-\bm{X}_i)\om_n(\bm{x})I\{\bm{x}\le \bm{z}\}\frac{f(\bm{x})}{\hat{f}_n(\bm{x})}d\bm{x}\label{eq:variance_06}\\
&+\frac{2}{\sqrt{n}}\sum\limits_{i=1}^{n}U_i\int(m(\bm{X}_i)-\hat{m}_n(\bm{x})) K_{h_n}(\bm{x}-\bm{X}_i)\om_n(\bm{x})I\{\bm{x}\le \bm{z}\}\frac{f(\bm{x})}{\hat{f}_n(\bm{x})}d\bm{x}.\label{eq:variance_07}
\end{align}
Concerning \eqref{eq:variance_04} and \eqref{eq:variance_05}, it can be shown that
\begin{align*}
\frac{1}{\sqrt{n}}\sum\limits_{i=1}^{n}\xi_i\int_{(-\bm{\infty},\bm{z}]} K_{h_n}(\bm{x}-\bm{X}_i)\om_n(\bm{x})\frac{f(\bm{x})}{\hat{f}_n(\bm{x})}d\bm{x}
&=\frac{1}{\sqrt{n}}\sum\limits_{i=1}^{n}\xi_i\om_n(\bm{X}_i)I\{\bm{X}_i\le \bm{z}\}+o_P(1),\\
&= T _n(1,\bm{z})+o_P(1),
\end{align*}
and
\begin{align*}
\frac{1}{\sqrt{n}} \sum\limits_{i=1}^{n}\int_{(-\bm{\infty},\bm{z}]} \left(\si^2(\bm{x})-\si^2(\bm{X}_i)\right)K_{h_n}(\bm{x}-\bm{X}_i)\om_n(\bm{x})\frac{f(\bm{x})}{\hat{f}_n(\bm{x})}d\bm{x}=o_P(1),
\end{align*}
uniformly in $\bm{z}\in\R^d$ respectively. Using the uniform rates of convergences of $\hat{m}_n$ from Lemma \ref{Raten kernel} (i) (a), which also hold on the slightly larger set $\bm{I}_n=[-c_n-Ch_n,c_n+Ch_n]^d$, it can be shown that the term \eqref{eq:variance_06} is negligible uniformly in $\bm{z}\in\R^d$. Finally, using similar methods as for the proof of Lemma \ref{Lemma_01}, it can be shown that the term \eqref{eq:variance_07} is as well negligible uniformly in $\bm{z}\in\R^d$. Putting the results together, the assertion of the lemma follows.

\end{proof}

\begin{proof}[Proof of Theorem \ref{mainth}]
The assertion (i) follows by Lemma \ref{Lemma_01} and Lemma \ref{Lemma_02} and by Lemma \ref{Raten kernel} (i) (a) together with the bandwidth constraints as
\begin{align*}
\hat{ T }_n(s,\bm{z})
&=\frac{1}{\sqrt{n}}\sum\limits_{i=1}^{\lf ns \rf}(m(\bm{X}_i)-\hat{m}_n(\bm{X}_i))^2\om_n(\bm{X}_i)I\{\bm{X}_i\le \bm{z}\}\\
&+\frac{2}{\sqrt{n}}\sum\limits_{i=1}^{\lf ns \rf}U_i(m(\bm{X}_i)-\hat{m}_n(\bm{X}_i))\om_n(\bm{X}_i)I\{\bm{X}_i\le \bm{z}\}\\
&+\frac{1}{\sqrt{n}}\sum\limits_{i=1}^{\lf ns \rf}\left(U_i^2-\hat{\si}^2_n(\bm{X}_i)\right)\om_n(\bm{X}_i)I\{\bm{X}_i\le \bm{z}\}.
\end{align*}
For (ii) note that $\{(\xi_t,\bm{X}_t):t\in\Z\}$ is strictly stationary and strongly mixing under $H_0$ and assumption \textbf{(G)}. Denote by $P$ the marginal distribution of $(\xi_1,\bm{X}_1)$. The assertion then follows by an application of 
Corollary 2.7 in \cite{Mohr2017} to the sequential empirical process $\{n^{-1/2}\sum_{i=1}^{\lf ns \rf} (\ph(\xi_i,\bm{X}_i)-\int \ph dP):s\in[0,1],\ph\in\cF\}$ indexed in the function class $\cF:=\{(\xi,\bm{x})\mapsto \xi I\{\bm{x}\le \bm{z}\}:\bm{z}\in\R^d\}$. The conditions that are needed for the asymptotic equicontinuity of the process 
are implied by assumptions $\textbf{(G)}$ and \textbf{($\bm{\xi}$)}. The convergence of the finite dimensional distributions can be shown by applying Corollary 1 in \cite{Rio199535}, which is a central limit theorem for strongly mixing triangular arrays. 

\end{proof}

\bigskip


\bibliographystyle{apa}
\bibliography{mybibfile}

\end{document}